\documentclass[a4paper,11pt]{amsart}

\usepackage{mathptmx,amssymb,amscd,latexsym,eulervm}
\usepackage{amsmath}
\usepackage{amsthm}
\usepackage{amsfonts}
\usepackage[onehalfspacing]{setspace}
\usepackage{paralist}
\usepackage{bm}
\usepackage{aliascnt}
\usepackage[initials,lite]{amsrefs}
\usepackage[inner=2.4cm,outer=2.4cm,bottom=2.9cm]{geometry}
\usepackage{xcolor}
\definecolor{citepink}{HTML}{AA3377}
\usepackage[colorlinks=true,citecolor=citepink,linkcolor=blue,urlcolor=blue]{hyperref}

\allowdisplaybreaks
\AtBeginDocument{\def\MR#1{}}

\makeatletter
\@namedef{subjclassname@2020}{\textup{2020} Mathematics Subject Classification}
\makeatother

\newtheorem{theorem}{Theorem}[section]
\newtheorem*{theoremA}{Theorem A}
\newtheorem*{theoremB}{Theorem B}
\newaliascnt{proposition}{theorem}
\newtheorem{proposition}[proposition]{Proposition}
\aliascntresetthe{proposition}
\newaliascnt{corollary}{theorem}
\newtheorem{corollary}[corollary]{Corollary}
\aliascntresetthe{corollary}
\newaliascnt{lemma}{theorem}
\newtheorem{lemma}[lemma]{Lemma}
\aliascntresetthe{lemma}
\theoremstyle{remark}
\newaliascnt{remark}{theorem}
\newtheorem{remark}[remark]{Remark}
\aliascntresetthe{remark}
\theoremstyle{definition}
\newtheorem{example}[theorem]{Example}

\DeclareMathOperator{\Hom}{Hom}
\DeclareMathOperator{\End}{End}
\DeclareMathOperator{\tr}{tr}
\DeclareMathOperator{\indeg}{indeg}

\newcommand{\kk}{\Bbbk}

\newcommand{\ZZ}{\mathbb Z}
\newcommand{\m}{\mathfrak m}

\title[Two-sided bounds for canonical traces of Veronese subalgebras]{Two-sided bounds for canonical traces of Veronese subalgebras}
\author{Sora Miyashita}
\address[Miyashita]{Department of Pure and Applied Mathematics, Graduate School of Information Science and Technology, The University of Osaka, Suita, Osaka 565-0871, Japan}
\email{u804642k@ecs.osaka-u.ac.jp}
\date{\today}
\keywords{canonical module, canonical trace, quasi-Gorenstein ring, punctured spectrum, Veronese subalgebra, $S_2$-ification, nearly Gorenstein ring}
\subjclass[2020]{Primary 13H10, 13A02; Secondary 13M05, 13F55}
\hypersetup{pdftitle={Canonical Traces and Anti-Canonical Modules of Veronese Subalgebras},pdfauthor={Sora Miyashita}}

\begin{document}
\emergencystretch=1em

\begin{abstract}
The classical Veronese formula of Goto--Watanabe identifies the canonical module of a Veronese subalgebra with the corresponding Veronese module. We prove the analogous compatibility for graded duals under a module-finite extension whose upper ring is equidimensional, satisfies Serre's condition $(S_2)$, and has dimension at least two, and obtain the formula for the $b$-invariant. This yields explicit two-sided bounds for canonical traces: the lower bound is controlled by the Loewy length of the canonical-trace quotient, whereas the upper bound is governed by the $a$- and $b$-invariants, with an $S_2$-ification refinement for unmixed rings. We also characterize when all sufficiently large Veronese subalgebras of standard graded level algebras are nearly Gorenstein when the $a$-invariant is negative. Applications include Segre products, Stanley--Reisner rings and determinantal rings.
\end{abstract}

\maketitle

\section{Introduction}

Passage to Veronese subalgebras is a basic operation in graded commutative algebra. Let $R=\bigoplus_{i\ge0}R_i$ be a Noetherian positively graded algebra over a field $\kk$. For $d\ge1$, its \emph{$d$-th Veronese subalgebra} is $R^{(d)}=\bigoplus_{i\ge0}R_{id}$. A classical theorem of Goto--Watanabe gives $\omega_{R^{(d)}}\cong(\omega_R)^{(d)}$; see~\cite{GW}*{Corollary~(3.1.3)}. Since the canonical trace is the image of the evaluation map $\omega_R\otimes_R\Hom_R(\omega_R,R)\to R$, its behavior under the Veronese operation depends not only on the canonical module but also on its graded dual, or \emph{anti-canonical module}. Our first result supplies the corresponding compatibility on the dual side.

For a finite graded $R$-module $M$, its \emph{trace} is $\tr_R(M)=\sum_{\varphi\in\Hom_R(M,R)}\varphi(M)$. For a finite graded module $N$, its \emph{initial degree} is $\indeg N=\min\{j\in\ZZ:N_j\ne0\}$, with $\indeg0=\infty$. Following Goto--Watanabe~\cite{GW}*{Definition~(2.1.2)}, let $\omega_R$ denote the graded canonical module of $R$, and set $a(R)=-\indeg\omega_R$ and $b(R)=\indeg\Hom_R(\omega_R,R)$. The \emph{$b$-invariant}, introduced by Okuma--Watanabe--Yoshida~\cite{OWY} in the normal graded setting, is the initial degree of the anti-canonical module. For a Cohen--Macaulay ring, $\tr_R(\omega_R)$ describes the non-Gorenstein locus, and $R$ is \emph{nearly Gorenstein} when its graded maximal ideal lies in $\tr_R(\omega_R)$; see~\cite{HHS}*{Lemma~2.1 and Definition~2.2}. See also \cites{Lindo,HKS,KM,MV,Miyazaki,JSSZ}.

\begin{theoremA}[see Theorem~\ref{thm:dual}]
Let $A\subseteq B$ be a degree-preserving inclusion of Noetherian positively graded $\kk$-algebras such that $B$ is finitely generated as an $A$-module, with $A$ standard graded. Assume that $B$ is equidimensional, satisfies $(S_2)$, and has dimension at least two. Then, for every $d\ge1$,
\[
\Hom_{B^{(d)}}(\omega_{B^{(d)}},B^{(d)})\cong\Hom_B(\omega_B,B)^{(d)},
\qquad
b(B^{(d)})=\left\lceil\frac{b(B)}d\right\rceil.
\]
\end{theoremA}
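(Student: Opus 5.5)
The plan is to use the Goto--Watanabe identification $\omega_{B^{(d)}}\cong(\omega_B)^{(d)}$ to reduce everything to comparing $\Hom_{B^{(d)}}\bigl((\omega_B)^{(d)},B^{(d)}\bigr)$ with $\Hom_B(\omega_B,B)^{(d)}$. There is a natural degree-preserving map
\[
\Phi\colon \Hom_B(\omega_B,B)^{(d)}\to\Hom_{B^{(d)}}\bigl((\omega_B)^{(d)},B^{(d)}\bigr),
\]
which sends a homogeneous $B$-linear map $\varphi$ of degree $jd$ to its restriction to the components of $\omega_B$ in degrees divisible by $d$. This restriction lands in $B^{(d)}$ because $\varphi$ shifts degrees by a multiple of $d$. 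I will show that $\Phi$ is bijective. The key input is that $B$ is equidimensional, $(S_2)$, of dimension $\ge2$, and module-finite over the standard graded $A$. These hypotheses let us view all modules over $A$ and use local cohomology supported at $\m_A$: both $B$ and $\omega_B$ have depth $\ge2$ at $\m_A$ (for $\omega_B$ this always holds once $\dim\ge2$, and $B$ has depth $\ge2$ by $(S_2)$ together with equidimensionality). Hence $\Hom_B(\omega_B,B)$ also has depth $\ge2$, since it is a Hom into a depth-$\ge2$ module.

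For injectivity of $\Phi$: if $\varphi$ vanishes on $(\omega_B)^{(d)}$, then for any homogeneous $x\in\omega_B$ and any $a\in A_1$ we have $a^{r}x\in(\omega_B)^{(d)}$ for a suitable $r<d$, so $a^r\varphi(x)=0$. Thus $\varphi(x)$ is killed by a power of $\m_A$, because $A$ is standard graded and the powers of linear forms generate $\m_A^{N}$. Since $\operatorname{depth}B\ge1$, this forces $\varphi(x)=0$.

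For surjectivity, take $\psi\colon(\omega_B)^{(d)}\to B^{(d)}$ and extend it. Given $x\in(\omega_B)_i$, choose $u\in A_{k}$ with $k\equiv -i \pmod d$ and set the candidate value $\varphi(x)=\psi(ux)/u$. To make this rigorous, I will use that $\omega_B$ and $B$ satisfy $(S_2)$, so that $\Hom$ and sections can be computed on the punctured spectrum $U=\operatorname{Spec}A\setminus\{\m_A\}$. On the affine opens $D_+(\ell)$, $\ell\in A_1$, the Veronese decomposition $M_\ell=\bigoplus_{r=0}^{d-1}\ell^{r}(M^{(d)})_{\ell}$ is compatible with the $B^{(d)}_\ell$-structure. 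Consequently $\psi$ extends uniquely to $\omega_{B,\ell}\to B_\ell$. These local extensions glue by uniqueness, and the depth $\ge2$ condition, i.e.\ $H^0_{\m}=H^1_{\m}=0$, gives $\Gamma_*(U,\cdot)$ equal to the module itself, producing a global $\varphi$. The step I expect to be the main obstacle is this surjectivity and gluing, in particular checking that the local inverse $\ell^{-r}$ is $B$-linear and not merely $B^{(d)}$-linear. I would handle it by noting that $B_\ell\cong B^{(d)}_\ell[\ell]/(\ell^d-\ell^d)$ is generated over $B^{(d)}_\ell$ by the unit $\ell$.

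Finally, the $b$-formula follows immediately: $\Hom_B(\omega_B,B)^{(d)}$ is nonzero in degree $jd$ (i.e.\ degree $j$ of $B^{(d)}$) exactly when $jd\ge$ some degree where the anti-canonical module is nonzero. Multiplying by a suitable power of a nonzerodivisor-type linear form of $A$ shows that the nonzero degrees form the whole interval $[b(B),\infty)$. This holds because $\Hom_B(\omega_B,B)$ is torsion-free over $A$ on the support, which follows from depth $\ge1$ and equidimensionality. Hence $b(B^{(d)})=\min\{j: jd\ge b(B)\}=\lceil b(B)/d\rceil$.
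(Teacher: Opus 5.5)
Your proof is correct, but it gets surjectivity by a different route from the paper.

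\textbf{The paper's route.} It first reduces to an infinite field and picks one linear nonzerodivisor $x\in A_1$. It then writes the extension explicitly as $\psi_x(u)=x^{-q}\varphi(x^qu)\in B_x$ and proves $\psi_x(u)\in B$ one height-one prime at a time. Equidimensionality and $\dim B\ge2$ show that every height-one prime $\mathfrak p$ misses $A_1$. A second linear form $y_{\mathfrak p}\notin\mathfrak p$ then gives $\psi_x(u)\in B_{\mathfrak p}$. The $(S_2)$ intersection $B=\bigcap_{\operatorname{ht}\mathfrak p=1}B_{\mathfrak p}$ (Lemma~\ref{lem:intersection}) finishes.

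\textbf{Your route.} You glue the unique local extensions over the cover $\{D(\ell)\}_{\ell\in A_1}$ of the punctured spectrum and use $H^0_{\m_A}(B)=H^1_{\m_A}(B)=0$, that is, $B=\Gamma(U,\widetilde B)$. Here equidimensionality and $\dim B\ge2$ are used only to show that every maximal ideal containing $\m_AB$ has height at least two, so $(S_2)$ gives $\operatorname{grade}(\m_AB,B)\ge2$.

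\textbf{Comparison.} Your argument is more general and does not depend on the field. It uses only $\operatorname{grade}(\m_AB,B)\ge2$, not $(S_2)$ at every prime. It also needs neither the infinite-field reduction nor a linear nonzerodivisor. Injectivity comes from $H^0_{\m_A}(B)=0$. The $b$-formula comes from the fact that $0\ne h\in H_j$ forces $A_1h\ne0$: otherwise $\m_Ah=0$, since $A$ is standard graded, contradicting $H^0_{\m_A}(H)=0$. State it that way rather than through a ``nonzerodivisor-type linear form'', which may not exist over a finite field. The paper's argument is more elementary and gives the extension by an explicit formula.

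\textbf{Small corrections.}
\begin{itemize}
\item The opens are $D(\ell)\subseteq\operatorname{Spec}$, not $D_+(\ell)$.
\item $M^{(d)}$ should be localized at $\ell^d$: $M_\ell=\bigoplus_{r=0}^{d-1}\ell^r(M^{(d)})_{\ell^d}$ and $B_\ell\cong B^{(d)}_{\ell^d}[T]/(T^d-\ell^d)$, which is free on $1,\ell,\dots,\ell^{d-1}$.
\item In positive characteristic the $r$-th powers of linear forms need not span $A_r$. They do generate an $\m_A$-primary ideal, which is all your injectivity step needs.
\item The depth of $\omega_B$ plays no role.
\end{itemize}
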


The proof obtains this extension by a codimension-one argument using $(S_2)$. The second ingredient is Proposition~\ref{prop:transfer}, which is valid for arbitrary finite graded modules. Together these results give the following estimate. Let $\m_d$ denote the graded maximal ideal of $R^{(d)}$.

\begin{theoremB}[see Theorem~\ref{thm:main}]
Let $R$ be a standard graded $\kk$-algebra
of dimension at least two
satisfying $(S_2)$, with graded maximal ideal $\m$. Set $\ell=\min\{j\ge0:\m^j\subseteq\tr_R(\omega_R)\}$ and assume that $\ell<\infty$. Then, for every $d\ge1$,
\[
\m_d^{\,1+\lceil(\ell-1)/d\rceil}\subseteq\tr_{R^{(d)}}(\omega_{R^{(d)}})\subseteq\m_d^{\,\lceil-a(R)/d\rceil+\lceil b(R)/d\rceil}.
\]
\end{theoremB}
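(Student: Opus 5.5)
We will read the canonical trace of $R^{(d)}$ off Goto--Watanabe together with Theorem~A, applied to $A=B=R$. These give
\[
\omega_{R^{(d)}}\cong(\omega_R)^{(d)},\qquad \Hom_{R^{(d)}}(\omega_{R^{(d)}},R^{(d)})\cong\Hom_R(\omega_R,R)^{(d)},
\]
where the second isomorphism is restriction of maps to degrees divisible by $d$. A standard graded ring satisfying $(S_2)$ of dimension at least two should be equidimensional, at least after the usual reductions; if not, we add this as an implicit hypothesis, as Theorem~A requires. Write $\omega=\omega_R$ and $\omega^\ast=\Hom_R(\omega,R)$. Then $\tr_{R^{(d)}}(\omega_{R^{(d)}})$ is the sum of the products $\varphi_j(\omega_i)\subseteq R_{i+j}$, taken over $\varphi\in(\omega^\ast)_j$ and all $i,j\in d\ZZ$.

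\emph{Upper bound.} Every nonzero summand lies in degree $i+j$ with $i\ge -a(R)$, $j\ge b(R)$ and $d\mid i,j$. Hence $i/d\ge\lceil -a(R)/d\rceil$ and $j/d\ge\lceil b(R)/d\rceil$. In $R^{(d)}$ we have $\m_d^{\,k}=\bigoplus_{n\ge k}R_{nd}$, because $R$ is standard graded, so $R_{nd}=R_d^{\,n}$ for $n\ge 0$. The trace therefore lies in $\m_d^{\,\lceil-a(R)/d\rceil+\lceil b(R)/d\rceil}$. If the exponent is negative, the inclusion is read as trivial.

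\emph{Lower bound.} We must show $R_{nd}\subseteq\tr_{R^{(d)}}(\omega_{R^{(d)}})$ whenever $n\ge 1+\lceil(\ell-1)/d\rceil$. For such $n$ we have $nd\ge \ell$, since $nd\ge d+\ell-1\ge\ell$. We then use the transfer principle of Proposition~\ref{prop:transfer}, which is stated for an arbitrary finite graded module $M$; take $M=\omega$. We expect its content to be that $\tr_{R^{(d)}}(M^{(d)})\supseteq\bigl(\m^{\,c}\tr_R(M)\bigr)\cap R^{(d)}$ with a shift $c\le d-1$, or more concretely that $\tr_R(M)_{m}\subseteq\tr_{R^{(d)}}(M^{(d)})$ once $m$ exceeds the relevant degrees by $d-1$.

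If we have to argue by hand, take $x\in R_{nd}\subseteq\m^{nd}\subseteq\tr_R(\omega)$ and write $x=\sum_k\varphi_k(u_k)$ with homogeneous $u_k\in\omega_{i_k}$ and $\varphi_k\in\omega^\ast_{j_k}$, where $i_k+j_k=nd$. We need to rewrite each summand with both degrees divisible by $d$. To do this, use $nd-\ell\ge d-1$ to factor out an element of $\m^{r}$, where $r\equiv -i_k\pmod d$ and $0\le r\le d-1$: we have $\varphi(u)\cdot y=\varphi(yu)$ for $y\in R_r$. Since $x\in\m^{\ell}\cdot\m^{nd-\ell}$, we can write $x$ as a sum of $y\cdot z$ with $z\in\m^{\ell}\subseteq\tr(\omega)$ of a suitable degree $\ell+s$ and $y$ of degree $nd-\ell-s$. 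For each term $\varphi(u)$ of $z$, pick the monomial factor $y=y'y''$ of $y$ with $y'$ absorbed into $u$ and $y''$ absorbed into $\varphi$, so that $\deg(y'u)\equiv0\pmod d$. This is possible because $\deg y\ge d-1$ and $R$ is standard graded, so $y$ is a sum of products of linear forms. Then $y\varphi(u)=(y''\varphi)(y'u)$ with both degrees in $d\ZZ$. By the identification of the anti-canonical module of $R^{(d)}$ above, $y''\varphi$ restricted to degrees divisible by $d$ is an element of $\Hom_{R^{(d)}}(\omega_{R^{(d)}},R^{(d)})$.

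\emph{Main obstacle.} The delicate step is the degree bookkeeping: ensuring that $nd-\ell\ge d-1$ really holds under the stated exponent, which requires $n\ge 1+\lceil(\ell-1)/d\rceil$ (note that $\ell+d-1\le nd$ is exactly $n\ge\lceil(\ell+d-1)/d\rceil$, which is the stated bound). A second point is checking that Theorem~A's isomorphism is indeed restriction, so that every restricted map lies in the dual over $R^{(d)}$. The case $\ell=0$ (Gorenstein locus everywhere) gives the exponent $1+\lceil -1/d\rceil=1$ and should be handled by the same argument.
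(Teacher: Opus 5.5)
Your two estimates match the paper's. The upper bound is Proposition~\ref{prop:upper}: Theorem~\ref{thm:dual} with $A=B=R$, after which only the trivial inequalities $\indeg(\omega_R)^{(d)}\ge\lceil -a(R)/d\rceil$ and $\indeg\Hom_R(\omega_R,R)^{(d)}\ge\lceil b(R)/d\rceil$ are needed. Your hand argument for the lower bound is the proof of Proposition~\ref{prop:transfer} with $M=\omega_R$ and $I=\m^\ell$, using $nd\ge \ell+d-1$. Two small remarks:
\begin{itemize}
\item The lower bound does not use Theorem~A at all. A homogeneous map $\omega_R\to R$ of degree in $d\ZZ$ restricts to an $R^{(d)}$-linear map $(\omega_R)^{(d)}\to R^{(d)}$ for trivial reasons.
\item For $\ell=0$, $d=1$ the lower exponent is $0$, not $1$. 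This is harmless.
\end{itemize}

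The genuine gap is equidimensionality. Theorem~A requires $R$ to be equidimensional, Theorem~B does not assume it, and you cannot simply add it as a hypothesis. The paper extracts it from $\ell<\infty$:
\begin{itemize}
\item A minimal prime $\mathfrak p$ of $R$ is homogeneous and different from $\m$, since otherwise $\dim R=0$.
\item Hence $\m^\ell\not\subseteq\mathfrak p$, so $\tr_{R_{\mathfrak p}}((\omega_R)_{\mathfrak p})=R_{\mathfrak p}$, which forces $(\omega_R)_{\mathfrak p}\neq 0$.
\item By the graded form of Aoyama's (1.7), $\operatorname{Ass}\omega_R$ consists exactly of the primes $\mathfrak q$ with $\dim R/\mathfrak q=\dim R$.
\item So $\mathfrak p\in\operatorname{Supp}\omega_R$ contains, and by minimality equals, such a $\mathfrak q$.
\end{itemize}

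Your instinct that every standard graded $(S_2)$ algebra is equidimensional is in fact correct. Hartshorne's connectedness theorem makes $\operatorname{Spec}R_{\m}$ connected in codimension one, and catenarity then forces all minimal primes to have the same dimension. But some such argument has to be given rather than assumed. With either argument inserted, your proof is complete and coincides with the paper's.
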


The integer $\ell$ is the Loewy length of $R/\tr_R(\omega_R)$, so $\ell<\infty$ is equivalent to the canonical trace having finite colength. Since canonical modules and trace ideals commute with localization, \cite{Aoyama}*{(1.7) and Corollary~4.3} and \cite{AG}*{Proposition~3.3} show that, under $(S_2)$, this forces $R$ to be equidimensional and quasi-Gorenstein on the punctured spectrum; see also~\cite{KM}*{Remark~2.9~(2), (5)}. In the Cohen--Macaulay case, this is precisely the condition that $R$ be Gorenstein on the punctured spectrum. The lower inclusion in Theorem~B requires neither $(S_2)$ nor unmixedness, whereas the upper inclusion comes from Theorem~A and the initial degrees of the canonical and anti-canonical modules. For unmixed rings, Theorem~\ref{thm:main} refines the upper bound using the $b$-invariant of the finite graded $S_2$-ification introduced in Section~\ref{sec:trace}; Example~\ref{ex:non-s2} shows that the sharp bound involving $b(R)$ itself can fail without $(S_2)$.

When $\ell\le1$, Theorem~B recovers the positive-dimensional Veronese stability theorem for nearly Gorenstein Cohen--Macaulay algebras and, by the same argument, also gives the Artinian case. Independently of the finite-colength hypothesis, Theorem~A yields an exact formula for the degree-one canonical trace of Veronese subalgebras of level algebras; for $a(R)<0$, Corollary~\ref{cor:level-large} gives a necessary and sufficient criterion for all sufficiently large Veronese subalgebras to be nearly Gorenstein. Section~\ref{sec:example} applies the results to Segre products, Stanley--Reisner rings and determinantal rings.

The paper is organized as follows. Section~\ref{sec:duality} proves Theorem~A and the level-algebra criterion. Section~\ref{sec:trace} proves Theorem~B, and the $S_2$-ification refinement. Section~\ref{sec:example} contains the applications.

\section{Canonical modules and graded duals}\label{sec:duality}

In this section, we fix the graded conventions used in the paper, prove the compatibility of anti-canonical modules with Veronese subalgebras, and apply it to level algebras. A \emph{positively graded algebra} $T=\bigoplus_{i\ge0}T_i$ will mean a Noetherian graded $\kk$-algebra with $T_0$ finite-dimensional over $\kk$; it is \emph{standard graded} when $T_0=\kk$ and $T$ is generated by $T_1$. Following Goto--Watanabe~\cite{GW}*{Definition~(2.1.2)}, and componentwise over the Artinian decomposition of $T_0$ when necessary, we write $\omega_T$ for its graded canonical module. Their Veronese canonical-module formula is understood in the same componentwise sense. The integer $a(T)=-\indeg\omega_T$ is the \emph{$a$-invariant} of $T$, and $Q(T)$ denotes its total quotient ring. For a graded $T$-module $M$ and $s\in\ZZ$, our shift convention is $[M(s)]_i=M_{i+s}$. For finite graded $T$-modules $M$ and $N$, we use the natural grading
$\Hom_T(M,N)_j=\{\varphi\in\Hom_T(M,N):\varphi(M_i)\subseteq N_{i+j}\text{ for all }i\}$.

For a graded module $M$ and integers $d\ge1$ and $0\le r<d$, set $M^{\langle d,r\rangle}=\bigoplus_{j\in\ZZ}M_{jd+r}$ and $M^{(d)}=M^{\langle d,0\rangle}$, with the usual regrading. We use the standard compatibility
$\omega_{R^{(d)}}\cong(\omega_R)^{(d)}$
of graded canonical modules with Veronese subalgebras; see \cite{GW}*{Corollary~(3.1.3)}. In particular, when $R$ is standard graded, positive-dimensional, equidimensional, and satisfies $(S_2)$,
\[
a(R^{(d)})=\left\lfloor\frac{a(R)}d\right\rfloor.
\]
Indeed, after a faithfully flat extension of the ground field, a linear nonzerodivisor is also $\omega_R$-regular by the graded form of~\cite{Aoyama}*{(1.7)}, so the nonzero graded components of $\omega_R$ occur in every degree at least $\indeg\omega_R$.
First of all, we show the following.

\begin{lemma}\label{lem:intersection}
Let $R$ be a Noetherian ring satisfying $(S_2)$. For each $\mathfrak p\in\operatorname{Spec}R$, let $\lambda_{\mathfrak p}\colon Q(R)\longrightarrow Q(R_{\mathfrak p})$ denote the canonical map. Then, 
$R=\bigcap_{\operatorname{ht}\mathfrak p=1}\lambda_{\mathfrak p}^{-1}(R_{\mathfrak p})$,
where $R_{\mathfrak p}$ is regarded as a subring of $Q(R_{\mathfrak p})$.
\end{lemma}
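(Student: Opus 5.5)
The inclusion $R\subseteq\bigcap_{\operatorname{ht}\mathfrak p=1}\lambda_{\mathfrak p}^{-1}(R_{\mathfrak p})$ is immediate, since $\lambda_{\mathfrak p}$ restricted to $R$ is the localization map $R\to R_{\mathfrak p}\subseteq Q(R_{\mathfrak p})$. For the reverse inclusion, I argue by contradiction through a conductor ideal and associated primes, using $(S_2)$ in the form $\operatorname{depth}R_{\mathfrak p}\ge\min(2,\operatorname{ht}\mathfrak p)$.

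Take $x$ in the intersection and write $x=a/s$ with $a\in R$ and $s$ a nonzerodivisor of $R$. Let
\[
I=(sR:_Ra)=\{r\in R: ra\in sR\}.
\]
This is the annihilator of the class of $x$ in $(R+Rx)/R$, so $x\in R$ if and only if $I=R$. Suppose $I\neq R$ and choose $\mathfrak p\in\operatorname{Ass}_R(R/I)$. Since $s\in I\subseteq\mathfrak p$ and $s$ avoids every associated prime of $R$, in particular every minimal prime, we get $\operatorname{ht}\mathfrak p\ge1$.

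Next I localize at $\mathfrak p$. Localization is flat, so $s/1$ is a nonzerodivisor on $R_{\mathfrak p}$ and $\lambda_{\mathfrak p}(x)=a/s$ in $Q(R_{\mathfrak p})$. Moreover $I_{\mathfrak p}=(sR_{\mathfrak p}:_{R_{\mathfrak p}}a)$. Multiplication by $a$ gives an injection $R_{\mathfrak p}/I_{\mathfrak p}\hookrightarrow R_{\mathfrak p}/sR_{\mathfrak p}$. Since $\mathfrak pR_{\mathfrak p}\in\operatorname{Ass}(R_{\mathfrak p}/I_{\mathfrak p})$, it follows that $\mathfrak pR_{\mathfrak p}\in\operatorname{Ass}(R_{\mathfrak p}/sR_{\mathfrak p})$. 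As $s$ is $R_{\mathfrak p}$-regular, this gives $\operatorname{depth}R_{\mathfrak p}=1$.

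By $(S_2)$ we then have $\min(2,\operatorname{ht}\mathfrak p)\le1$, so $\operatorname{ht}\mathfrak p=1$. The hypothesis $\lambda_{\mathfrak p}(x)\in R_{\mathfrak p}$ means $a\in sR_{\mathfrak p}$, i.e.\ $I_{\mathfrak p}=R_{\mathfrak p}$. This contradicts $I\subseteq\mathfrak p$. Hence $I=R$ and $x\in R$.

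The main point requiring care is the bookkeeping between $Q(R)$ and $Q(R_{\mathfrak p})$. One must check that the conductor commutes with localization and that $\lambda_{\mathfrak p}(x)\in R_{\mathfrak p}$ translates exactly to $I_{\mathfrak p}=R_{\mathfrak p}$. Both follow from flatness of $R\to R_{\mathfrak p}$ and from $s$ remaining a nonzerodivisor after localization.
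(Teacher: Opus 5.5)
Your proof is correct and is essentially the paper's argument: your $R/I$ with $I=(sR:_Ra)$ is isomorphic to the paper's cyclic module $M=R(a+sR)\subseteq R/sR$, and both proofs choose an associated prime, show it has height one, and obtain a contradiction from $a/1\in sR_{\mathfrak p}$. The only difference is how height one is reached. You get it directly from $\operatorname{depth}R_{\mathfrak p}=1$ together with $(S_2)$, whereas the paper uses that $R/sR$ satisfies $(S_1)$ and then applies Krull's principal ideal theorem.
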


This is the standard codimension-one intersection property associated with $(S_2)$. The following proof is essentially the same as \cite{MatsumuraCA}*{\S17, Theorem~38 and (17.I)}, but we include it for the reader's convenience.

\begin{proof}
Let $z=a/s\in Q(R)$, where $s$ is a nonzerodivisor, and assume that $\lambda_{\mathfrak p}(z)\in R_{\mathfrak p}$ for every height-one prime $\mathfrak p$. Suppose that $a\notin sR$, and set $M=R(a+sR)\subseteq R/sR$. Then $M\ne0$, so choose $\mathfrak p\in\operatorname{Ass}_R M$. Since $M\subseteq R/sR$, one has $\mathfrak p\in\operatorname{Ass}_R(R/sR)$. $R/sR$ satisfies $(S_1)$, so $\mathfrak p$ is minimal over $sR$. Krull's principal ideal theorem gives $\operatorname{ht}\mathfrak p\le1$. Since $s$ is a nonzerodivisor, $\mathfrak p$ is not a minimal prime of $R$; hence $\operatorname{ht}\mathfrak p=1$.

By the choice of $\mathfrak p$, one has $M_{\mathfrak p}\ne0$. On the other hand, $\lambda_{\mathfrak p}(z)\in R_{\mathfrak p}$ implies $a/1\in sR_{\mathfrak p}$. Thus $(a+sR)_{\mathfrak p}=0$, and therefore $M_{\mathfrak p}=0$, a contradiction.
\end{proof}

\begin{proposition}\label{prop:residue}
Let $A\subseteq B$ be a degree-preserving inclusion of Noetherian positively graded $\kk$-algebras such that $B$ is finitely generated as an $A$-module. Assume that $A$ is standard graded, that $B$ is equidimensional of dimension at least two and satisfies $(S_2)$, and that $A_1$ contains a nonzerodivisor on $B$. Then, for every finite graded $B$-module $M$, every $d\ge1$, every $0\le r<d$, and every $n\in\ZZ$, restriction induces an isomorphism
$\Hom_{B^{(d)}}(M^{\langle d,r\rangle},B^{(d)})_n\cong[\Hom_B(M,B)]_{nd-r}$.
\end{proposition}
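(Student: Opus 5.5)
The plan is to realize the isomorphism explicitly as restriction and to construct its inverse by "dividing out" powers of a linear nonzerodivisor. A homogeneous $\varphi\in[\Hom_B(M,B)]_{nd-r}$ sends $M_{jd+r}$ into $B_{(j+n)d}$, so its restriction $\rho(\varphi)$ to $M^{\langle d,r\rangle}$ is a $B^{(d)}$-linear map of degree $n$. Thus restriction gives a well-defined $\kk$-linear map
\[
\rho\colon[\Hom_B(M,B)]_{nd-r}\longrightarrow\Hom_{B^{(d)}}(M^{\langle d,r\rangle},B^{(d)})_n .
\]
Throughout, fix $x\in A_1$ that is a nonzerodivisor on $B$, as in the hypotheses.

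\emph{Injectivity.} Suppose $\rho(\varphi)=0$ and let $m\in M_i$. Choose $k\ge0$ with $i+k\equiv r\pmod d$. Then $x^km\in M^{\langle d,r\rangle}$, so $x^k\varphi(m)=\varphi(x^km)=0$. Since $x$ is $B$-regular, $\varphi(m)=0$.

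\emph{Surjectivity.} Given $\psi$ of degree $n$, define, for $m\in M_i$ and $k$ as above,
\[
\varphi(m)=\psi(x^km)/x^k\in Q(B),
\]
and extend additively over homogeneous components.
\begin{enumerate}
\item Well-definedness: replacing $k$ by $k+d$ gives the same element, because $x^d\in B^{(d)}$ and so $\psi(x^{k+d}m)=x^d\psi(x^km)$.
\item $B$-linearity: let $b\in B_e$ and choose $t\ge0$ with $e+t\equiv0\pmod d$, so that $x^tb\in B^{(d)}$. Then $B^{(d)}$-linearity of $\psi$ gives $x^t\psi(x^{k'}bm)=x^tb\,\psi(x^km)$ for suitable exponents with $k'\equiv k-e\pmod d$. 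After cancelling powers of $x$ in $Q(B)$, this yields $\varphi(bm)=b\varphi(m)$.
\item Degree: $\varphi$ is homogeneous of degree $nd-r$ as a map into $Q(B)$, and by construction it restricts to $\psi$.
\end{enumerate}

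The remaining task, which I expect to be the main obstacle, is to show that $\varphi(m)\in B$. The plan is to apply Lemma~\ref{lem:intersection}, using $(S_2)$: it suffices that $\lambda_{\mathfrak p}(\varphi(m))\in B_{\mathfrak p}$ for every height-one prime $\mathfrak p$ of $B$.

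\emph{Finding a linear form outside $\mathfrak p$.} Fix such a $\mathfrak p$. I claim there is $y\in A_1$ with $y\notin\mathfrak p$. Otherwise $A_+B\subseteq\mathfrak p$. Since $B$ is module-finite over $A$ and $B_0$ is Artinian, $B/A_+B$ has finite length, so $\mathfrak p$ would be a graded maximal ideal of $B$. Because $B$ is equidimensional and graded (hence catenary locally at graded maximal ideals), such a $\mathfrak p$ has height $\dim B\ge2$, a contradiction.

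\emph{Showing $\varphi(m)\in B_{\mathfrak p}$.} I will show that in $Q(B_{\mathfrak p})$ one has
\[
\lambda_{\mathfrak p}(\varphi(m))=\psi(y^lm)/y^l \quad\text{for } l\ge0 \text{ with } i+l\equiv r\pmod d .
\]
Since $y$ is a unit in $B_{\mathfrak p}$, this gives $\lambda_{\mathfrak p}(\varphi(m))\in B_{\mathfrak p}$. To compare the two expressions, multiply $x^km$ by $y^l x^{c}$ and $y^lm$ by $x^k x^{c}$, where $c\ge0$ is chosen with $c\equiv -l\equiv -k\pmod d$. Then $x^{c}y^l$ and $x^{c+k}$ are monomials of degree divisible by $d$, hence lie in $B^{(d)}$. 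Applying $B^{(d)}$-linearity of $\psi$ to $\psi(x^{k+c}y^lm)$ in two ways gives
\[
x^{c}y^l\,\psi(x^km)=x^{k+c}\,\psi(y^lm) .
\]
In $Q(B_{\mathfrak p})$ the element $x$ is invertible, being the image of a nonzerodivisor of $B$, and $y$ is a unit of $B_{\mathfrak p}$. Dividing through yields the claimed equality. Lemma~\ref{lem:intersection} then gives $\varphi(m)\in B$, so $\varphi\in[\Hom_B(M,B)]_{nd-r}$ with $\rho(\varphi)=\psi$.
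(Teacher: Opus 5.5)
Your proposal is correct and follows essentially the same route as the paper: you extend $\psi$ by $m\mapsto x^{-k}\psi(x^km)$, show that some $y\in A_1$ avoids each height-one prime $\mathfrak p$ (using module-finiteness, equidimensionality and $\dim B\ge2$), compare the two extensions in $Q(B_{\mathfrak p})$, and conclude with Lemma~\ref{lem:intersection}; your elementwise identity $x^cy^l\psi(x^km)=x^{k+c}\psi(y^lm)$ is a streamlined form of the paper's comparison of $(\psi_x)_{\mathfrak p}$ with $(\psi_{\mathfrak p})_x$. Two small points: in the $B$-linearity step the identity should read $\psi(x^{k+t}bm)=x^tb\,\psi(x^km)$ (so $k'=k+t$), without the extra factor $x^t$ on the left; and the height claim is best justified by taking a minimal prime $\mathfrak q\subseteq\mathfrak p$ and noting that the maximal ideal $\mathfrak p/\mathfrak q$ of the affine domain $B/\mathfrak q$ has height $\dim B/\mathfrak q=\dim B$, rather than by appealing to catenarity.
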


\begin{proof}
Let $x\in A_1$ be a nonzerodivisor on $B$. Restriction gives the displayed map. It is injective. Indeed, suppose that a homogeneous map $\psi\colon M\to B$ vanishes on $M^{\langle d,r\rangle}$. For a homogeneous element $u\in M$, choose $q\ge0$ such that $\deg u+q\equiv r\pmod d$. Then $x^qu\in M^{\langle d,r\rangle}$, and hence $x^q\psi(u)=\psi(x^qu)=0$. Since $x$ is a nonzerodivisor on $B$, it follows that $\psi(u)=0$.

For surjectivity, let $\varphi\colon M^{\langle d,r\rangle}\to B^{(d)}$ be homogeneous of degree $n$. For a homogeneous element $u\in M$, choose $q\ge0$ such that $\deg u+q\equiv r\pmod d$, and set
$\psi_x(u)=x^{-q}\varphi(x^qu)\in B_x$.
This definition is independent of the choice of $q$. Indeed, if $q'\ge q$ is another choice, then $q'-q$ is a nonnegative multiple of $d$, and $B^{(d)}$-linearity gives
$x^{-q'}\varphi(x^{q'}u)=x^{-q'}x^{q'-q}\varphi(x^qu)=x^{-q}\varphi(x^qu)$.

We next verify $B$-linearity. Let $b\in B$ and $u\in M$ be homogeneous, and let $q$ be admissible for $u$. Choose $q'\ge q$ such that $\deg b+q'-q$ is a nonnegative multiple of $d$. Then $q'$ is admissible for $bu$, and $bx^{q'-q}\in B^{(d)}$. Therefore
\[
\psi_x(bu)=x^{-q'}\varphi(x^{q'}bu)=x^{-q'}\varphi\bigl(bx^{q'-q}x^qu\bigr)=x^{-q'}bx^{q'-q}\varphi(x^qu)=b\psi_x(u).
\]
Extending additively, we obtain a homogeneous $B$-linear map $\psi_x\colon M\to B_x$ of degree $nd-r$.

Let $\mathfrak p$ be a height-one prime of $B$. We claim that $A_1\nsubseteq\mathfrak p$. Suppose otherwise. Since $A$ is standard graded, one has $\mathfrak p\cap A=A_+$. Consequently, $B/\mathfrak p$ is module-finite over $A/A_+=\kk$. Since $B/\mathfrak p$ is a domain, it is a field, and hence $\mathfrak p$ is maximal.

Choose a minimal prime $\mathfrak q\subseteq\mathfrak p$ of $B$. Since $B$ is equidimensional, $\dim(B/\mathfrak q)=\dim B$. Moreover, $B/\mathfrak q$ is an affine domain over $\kk$, and $\mathfrak p/\mathfrak q$ is maximal. Hence
$\operatorname{ht}_{B/\mathfrak q}(\mathfrak p/\mathfrak q)=\dim(B/\mathfrak q)=\dim B$.
It follows that $\operatorname{ht}_B\mathfrak p\ge\dim B\ge2$, contrary to $\operatorname{ht}_B\mathfrak p=1$. Thus we may choose $y_{\mathfrak p}\in A_1\setminus\mathfrak p$.

Using $y_{\mathfrak p}$ in place of $x$, the same construction defines a $B_{\mathfrak p}$-linear map $\psi_{\mathfrak p}\colon M_{\mathfrak p}\to B_{\mathfrak p}$ whose restriction to $M^{\langle d,r\rangle}$ agrees with $\varphi$. Explicitly, for a homogeneous $u\in M$ and $q\ge0$,
$\psi_{\mathfrak p}(u/1)=y_{\mathfrak p}^{-q}\varphi(y_{\mathfrak p}^qu)\in B_{\mathfrak p}$.

The $B_x$-module $M_x$ is generated by the image of $M^{\langle d,r\rangle}$. Indeed, for every homogeneous $u\in M$, one may choose $q\ge0$ such that $x^qu\in M^{\langle d,r\rangle}$, and then $u=x^{-q}(x^qu)$ in $M_x$. Hence, after localizing at both $\mathfrak p$ and $x$, the maps $\psi_x$ and $\psi_{\mathfrak p}$ agree:
$(\psi_x)_{\mathfrak p}=(\psi_{\mathfrak p})_x\colon M_{\mathfrak p,x}\longrightarrow B_{\mathfrak p,x}$.
Since $x$ is a nonzerodivisor on $B_{\mathfrak p}$, it follows that, for every homogeneous $u\in M$,
$\lambda_{\mathfrak p}(\psi_x(u))=\psi_{\mathfrak p}(u/1)\in B_{\mathfrak p}$
inside $Q(B_{\mathfrak p})$.

This holds for every height-one prime $\mathfrak p$ of $B$. Lemma~\ref{lem:intersection} therefore gives $\psi_x(u)\in B$ for every homogeneous $u\in M$. Thus $\psi_x$ is a homogeneous $B$-linear map $M\to B$ of degree $nd-r$, and its restriction to $M^{\langle d,r\rangle}$ is $\varphi$.
\end{proof}

We prove the main result of this paper below.

\begin{theorem}\label{thm:dual}
Let $A\subseteq B$ be a degree-preserving inclusion of Noetherian positively graded $\kk$-algebras such that $B$ is finitely generated as an $A$-module, with $A$ standard graded. Assume that $B$ is equidimensional, satisfies $(S_2)$, and has dimension at least two. Then, for every $d\ge1$,
$\Hom_{B^{(d)}}(\omega_{B^{(d)}},B^{(d)})\cong\Hom_B(\omega_B,B)^{(d)}$
and
$b(B^{(d)})=\left\lceil\frac{b(B)}{d}\right\rceil$,
with the convention that $\lceil\infty/d\rceil=\infty$.
\end{theorem}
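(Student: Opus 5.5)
The plan is to combine the Goto--Watanabe formula $\omega_{B^{(d)}}\cong(\omega_B)^{(d)}$ with Proposition~\ref{prop:residue}, applied to $M=\omega_B$ and $r=0$. Proposition~\ref{prop:residue} assumes that $A_1$ contains a nonzerodivisor on $B$, which Theorem~\ref{thm:dual} does not, so I first reduce to that situation. Extending the ground field $\kk$ to an infinite field (for instance $\kk(t)$) is faithfully flat and degree-preserving. It commutes with Veronese subalgebras, with graded canonical modules and with graded $\Hom$ of finite modules, and it preserves equidimensionality, $(S_2)$ and dimension. An isomorphism of finite graded modules descends, since both sides are defined over $\kk$ and graded isomorphism can be tested after this extension (Hom commutes with the flat base change and isomorphism is detected componentwise by dimension counts together with a generic choice). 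Alternatively, I can avoid descent by noting that the restriction map constructed below is defined over $\kk$ and its bijectivity is detected after base change. So we may assume $\kk$ is infinite.

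Next I need a linear nonzerodivisor. By $(S_2)$ with $\dim B\ge 2$ (in fact $(S_1)$ suffices), the associated primes of $B$ are its minimal primes. Each minimal prime $\mathfrak q$ has $\dim B/\mathfrak q=\dim B\ge 1$ by equidimensionality. Since $B$ is module-finite over $A$, the contraction $\mathfrak q\cap A$ is not $A_+$: otherwise $B/\mathfrak q$ would be finite over $\kk$. As $A$ is standard graded, this gives $A_1\not\subseteq\mathfrak q$. Prime avoidance over the infinite field then yields $x\in A_1$ outside every associated prime of $B$, so $x$ is a nonzerodivisor on $B$.

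Now I apply Proposition~\ref{prop:residue} with $M=\omega_B$, $r=0$. For every $n$ it gives $\Hom_{B^{(d)}}((\omega_B)^{(d)},B^{(d)})_n\cong[\Hom_B(\omega_B,B)]_{nd}$, via restriction. These componentwise isomorphisms are compatible with the $B^{(d)}$-module structure, because restriction is $B^{(d)}$-linear. Summing over $n$ therefore gives a graded isomorphism $\Hom_{B^{(d)}}((\omega_B)^{(d)},B^{(d)})\cong\Hom_B(\omega_B,B)^{(d)}$. Composing with $\omega_{B^{(d)}}\cong(\omega_B)^{(d)}$ gives the first claim.

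For the $b$-invariant, write $H=\Hom_B(\omega_B,B)$. Then $b(B^{(d)})=\min\{n: H_{nd}\ne0\}$. If $H=0$, both sides are $\infty$. Otherwise $H\ne 0$ is a graded submodule of a finite direct sum of shifts of $B$ (take a presentation of $\omega_B$), so $x$ is a nonzerodivisor on $H$. Hence the multiplication maps $x\colon H_j\to H_{j+1}$ are injective, and $H_j\ne0$ for every $j\ge b(B)$. Then the least $n$ with $nd\ge b(B)$ is $\lceil b(B)/d\rceil$, and $H_{nd}=0$ for smaller $n$.

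I expect the main obstacle to be the field-extension reduction, that is, making sure that the graded canonical module, the Veronese operation and the hypotheses are all compatible with base change, together with the descent of the isomorphism. To avoid descent entirely, I would use that the restriction map is defined over $\kk$: its injectivity and surjectivity in each degree are statements about finite-dimensional $\kk$-spaces and can be checked after the extension.
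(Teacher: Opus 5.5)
Your proposal is correct and follows the paper's proof essentially step for step. You reduce to an infinite ground field, use $\operatorname{Ass}B=\operatorname{Min}B$ together with module-finiteness over the standard graded $A$ to find a nonzerodivisor $x\in A_1$ on $B$, apply Proposition~\ref{prop:residue} with $M=\omega_B$ and $r=0$ combined with the Goto--Watanabe formula, and read off the $b$-invariant from injectivity of multiplication by $x$ on $\Hom_B(\omega_B,B)$. The paper settles the base change by citing Herzog--Hibi--Stamate. Of your two descent arguments, the second is the sound one: the restriction map is defined over $\kk$, so its bijectivity can be checked degree by degree after extending scalars. The first, vaguer argument via dimension counts and a generic choice is not needed.
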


\begin{proof}
By faithfully flat base change, we may assume that $\kk$ is infinite; see \cite{HHS}*{Lemma~1.5~(iii)}.

Since $B$ satisfies $(S_2)$, one has $\operatorname{Ass}_B B=\operatorname{Min}B$. No minimal prime of $B$ contains $A_1$. Indeed, suppose that $\mathfrak q\in\operatorname{Min}B$ contains $A_1$. Then $\mathfrak q\cap A=A_+$, so $B/\mathfrak q$ is module-finite over $A/A_+=\kk$. Hence $\mathfrak q$ is maximal and $\dim(B/\mathfrak q)=0$, contradicting equidimensionality and $\dim B\ge2$.

Thus $A_1$ is not contained in any associated prime of $B$. Since $\kk$ is infinite, we may choose
$x\in A_1\setminus\bigcup_{\mathfrak q\in\operatorname{Ass}_B B}\mathfrak q$.
Then $x$ is a nonzerodivisor on $B$. Proposition~\ref{prop:residue}, applied to $M=\omega_B$ and $r=0$, gives
\[
\Hom_{B^{(d)}}((\omega_B)^{(d)},B^{(d)})\cong\Hom_B(\omega_B,B)^{(d)}.
\]
Together with Goto--Watanabe's formula $\omega_{B^{(d)}}\cong(\omega_B)^{(d)}$ from \cite{GW}*{Corollary~(3.1.3)}, this proves the first assertion.

Set $H=\Hom_B(\omega_B,B)$. Suppose first that $H\ne0$, and write $b(B)=\min\{j\in\ZZ\mid H_j\ne0\}$. Multiplication by $x$ is injective on $H$, since $x$ is a nonzerodivisor on $B$. Therefore $H_j\ne0$ for every $j\ge b(B)$. Using the first assertion and the usual regrading of the Veronese module, we obtain
\[
b(B^{(d)})=\min\{n\in\ZZ : H_{nd}\ne0\}=\left\lceil\frac{b(B)}{d}\right\rceil.
\]
If $H=0$, then the first assertion shows that $\Hom_{B^{(d)}}(\omega_{B^{(d)}},B^{(d)})=0$, so both sides are infinite by convention.
\end{proof}

\begin{remark}
The dimension assumption in Theorem~\ref{thm:dual} cannot be dropped. Let $R=\kk[x]$ and $C=R^{(2)}=\kk[x^2]$, where $C$ is equipped with the usual Veronese regrading. Then
$\omega_R=R(-1)$
and
$\omega_C=C(-1)\cong(\omega_R)^{(2)}$.
Nevertheless,
$\Hom_C(\omega_C,C)\cong C(1)$,
$\Hom_R(\omega_R,R)^{(2)}\cong R(1)^{(2)}\cong C$.
Thus the two graded modules have initial degrees $-1$ and $0$, respectively, and hence are not isomorphic. Under these identifications, the restriction map is $C\to C(1)$, $f\mapsto x^2f$, and its cokernel is $\kk(1)$.
\end{remark}

The notion of a level algebra was introduced by Stanley; see~\cite{StanleyCCA}*{Chapter~III, Section~3}. A standard graded Cohen--Macaulay algebra is \emph{level} if its graded canonical module is generated in a single degree. Applying Theorem~\ref{thm:dual}, we obtain a criterion for the nearly Gorenstein property of Veronese subalgebras of level algebras without any punctured-Gorenstein assumption; see Corollary~\ref{cor:level-large}.

\begin{proposition}\label{prop:level-trace}
Let $R$ be a standard graded level algebra of dimension at least two, and fix $d\ge1$. Set $c=-a(R)$ and $e=\lceil c/d\rceil$. Then
$[\tr_{R^{(d)}}(\omega_{R^{(d)}})]_1
=
[\omega_R]_{de}
[\Hom_R(\omega_R,R)]_{d(1-e)}$,
where the product is taken via evaluation. Consequently, $R^{(d)}$ is nearly Gorenstein precisely when the product on the right is $R_d$. If $c>0$ and $d\ge c$, then
$[\tr_{R^{(d)}}(\omega_{R^{(d)}})]_1
=
R_{d-c}[\tr_R(\omega_R)]_c$.
\end{proposition}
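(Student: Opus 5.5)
Write $\omega=\omega_R$, $H=\Hom_R(\omega,R)$, $S=R^{(d)}$, and let $\m_d$ be the graded maximal ideal of $S$. Since $R$ is level, it is Cohen--Macaulay, and being standard graded and of dimension at least two it is equidimensional and satisfies $(S_2)$. So Theorem~\ref{thm:dual} applies with $A=B=R$. Combined with the Goto--Watanabe formula, it gives $\omega_S\cong\omega^{(d)}$ and $\Hom_S(\omega_S,S)\cong H^{(d)}$. The identification comes from Proposition~\ref{prop:residue}: every $S$-linear map $\omega^{(d)}\to S$ is the restriction of a unique $R$-linear map $\omega\to R$. Under these identifications the evaluation pairing of $S$ is the restriction of the evaluation pairing of $R$. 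Hence $\tr_S(\omega_S)$ is spanned by products $\varphi(u)$ with $u\in\omega_{di}$ and $\varphi\in H_{dj}$, and this product lies in $R_{d(i+j)}=S_{i+j}$.

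\textbf{The degree-one identity.} I will show that $[\tr_S(\omega_S)]_1=\sum_{i+j=1}\omega_{di}H_{dj}$ collapses to the single term $i=e$. Since $R$ is level, $\omega$ is generated in degree $-a(R)=c$. So for $i\ge e$ we have $\omega_{di}=R_{d(i-e)}\omega_{de}$, because $di\ge de\ge c$ and $R$ is standard graded. Then $\omega_{di}H_{d(1-i)}=\omega_{de}\,R_{d(i-e)}H_{d(1-i)}\subseteq\omega_{de}H_{d(1-e)}$. For $i<e$ we have $di<c$, so $\omega_{di}=0$. This proves the displayed formula. Since $\m_d$ is generated by $S_1=R_d$, $S$ is nearly Gorenstein iff $\m_d\subseteq\tr_S(\omega_S)$, iff $S_1\subseteq[\tr_S(\omega_S)]_1$. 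The last condition says the product equals $R_d$.

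\textbf{The case $c>0$ and $d\ge c$.} Here $e=1$, so the identity reads $[\tr_S(\omega_S)]_1=\omega_dH_0$.
\begin{itemize}
\item We have $\omega_d=R_{d-c}\omega_c$, because $\omega$ is generated in degree $c$. Hence $\omega_dH_0=R_{d-c}\,\omega_cH_0$.
\item Again because $\omega$ is generated by $\omega_c$, $\tr_R(\omega)=\sum_j \omega_cH_j R$. Its degree-$c$ part therefore involves only $j\le0$: it is $\sum_{j\le0}R_{-j}\,\omega_cH_j$.
\item It remains to show $H_j=0$ for $j<0$. By the surjectivity in the previous item, $\tr_R(\omega)$ is generated by the elements of $\omega_cH_j$, which have degree $c+j$. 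If some $H_j$ with $j<0$ were nonzero, a nonzero element of it would send some generator in $\omega_c$ to a nonzero element of $R$ of degree $c+j<c$. To rule this out, I use the standard graded $a$-invariant bound: for Cohen--Macaulay $R$ one has $b(R)\ge -a(R)$...
\end{itemize}
Checking this directly: $H\otimes\omega\to R$ yields $\omega\cdot H$ in degrees $\ge\indeg\omega+\indeg H=c+b$, while $\tr(\omega)\ne0$ only forces $c+b\ge0$. So $b\ge -c$, and $H_j$ with $j<0$ may be nonzero. I therefore do not expect $H_j=0$ to be provable, and I instead argue that the extra terms are already included.

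For $j<0$, take $r\in R_{-j}$ and $\varphi\in H_j$. Then $r\varphi\in H_0$, and $R_{-j}\,\omega_cH_j=\omega_c\,(R_{-j}H_j)\subseteq\omega_cH_0$. Hence $[\tr_R(\omega)]_c=\omega_cH_0$. Multiplying by $R_{d-c}$ gives $[\tr_S(\omega_S)]_1=R_{d-c}[\tr_R(\omega)]_c$, as claimed.

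The main obstacle is justifying that the $S$-evaluation pairing coincides with the restricted $R$-pairing. This is exactly the content of Proposition~\ref{prop:residue}, together with the fact that the Goto--Watanabe isomorphism is the natural Veronese identification.
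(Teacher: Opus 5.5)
Your proof is correct and is essentially the paper's argument. Collapsing $\sum_{i+j=1}[\omega_R]_{di}H_{dj}$ to the single term $i=e$ via $[\omega_R]_{di}=R_{d(i-e)}[\omega_R]_{de}$ is the paper's observation that $\omega_{R^{(d)}}$ is generated in degree $e$. Your absorption $R_{-j}H_j\subseteq H_0$ is the paper's step $[\omega_R]_{c+j}H_{-j}=R_j[\omega_R]_cH_{-j}\subseteq[\omega_R]_cH_0$.

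You should delete the aborted attempt to prove $H_j=0$ for $j<0$. That claim is false in general: for Gorenstein $R$ with $c>0$ one has $b(R)=-c<0$.

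The compatibility of the evaluation pairings is also not a real obstacle. The trace depends only on the isomorphism class of $\omega_{R^{(d)}}$, and Proposition~\ref{prop:residue} already describes $\Hom_{R^{(d)}}((\omega_R)^{(d)},R^{(d)})$ by restriction.
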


\begin{proof}
Set $H=\Hom_R(\omega_R,R)$. Since $R$ is level, $\omega_R=R[\omega_R]_c$. For $n\ge e$, standard gradedness gives
\[
[\omega_R]_{nd}
=R_{nd-c}[\omega_R]_c
=R_{(n-e)d}R_{ed-c}[\omega_R]_c
=R_{(n-e)d}[\omega_R]_{ed}.
\]
Hence the isomorphism $\omega_{R^{(d)}}\cong(\omega_R)^{(d)}$ shows that $\omega_{R^{(d)}}$ is generated in degree $e$. By Theorem~\ref{thm:dual}, the degree-$(1-e)$ component of its graded dual is $H_{d(1-e)}$. Taking the degree-one part of the evaluation map gives
$[\tr_{R^{(d)}}(\omega_{R^{(d)}})]_1
=[\omega_R]_{de}H_{d(1-e)}$.
Since $R^{(d)}$ is Cohen--Macaulay and standard graded, it is nearly Gorenstein precisely when this component equals $R_d$.

Suppose now that $d\ge c>0$. Then $e=1$ and $[\omega_R]_d=R_{d-c}[\omega_R]_c$. Moreover,
$[\tr_R(\omega_R)]_c=[\omega_R]_cH_0$.
Indeed, every contribution $[\omega_R]_{c+j}H_{-j}$ to the degree-$c$ trace is contained in $[\omega_R]_cH_0$, because $[\omega_R]_{c+j}=R_j[\omega_R]_c$, and the reverse inclusion is the term $j=0$. The final formula follows.
\end{proof}

\begin{corollary}\label{cor:level-large}
Let $R$ be a standard graded level algebra of dimension at least two with $a(R)<0$. Set $J=[\tr_R(\omega_R)]_{-a(R)}R$. For $d\ge-a(R)$, $R^{(d)}$ is nearly Gorenstein precisely when $J_d=R_d$. Hence all sufficiently large Veronese subalgebras are nearly Gorenstein precisely when $\sqrt J=\m$.
\end{corollary}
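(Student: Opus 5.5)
The plan is to read off both assertions directly from Proposition~\ref{prop:level-trace}. Set $c=-a(R)>0$ and $H=\Hom_R(\omega_R,R)$. For $d\ge c$, Proposition~\ref{prop:level-trace} gives
\[
[\tr_{R^{(d)}}(\omega_{R^{(d)}})]_1=R_{d-c}[\tr_R(\omega_R)]_c .
\]
It also says that $R^{(d)}$ is nearly Gorenstein exactly when this degree-one component equals $R_d$. The reason is that $R^{(d)}$ is standard graded, so $\m_d$ is generated by $[R^{(d)}]_1=R_d$. Since $J$ is the ideal generated by the $\kk$-space $[\tr_R(\omega_R)]_c$, and that space sits in degree $c$, we have $J_d=R_{d-c}[\tr_R(\omega_R)]_c$ for $d\ge c$. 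This yields the first claim: for $d\ge -a(R)$, $R^{(d)}$ is nearly Gorenstein if and only if $J_d=R_d$.

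For the second claim I need: $J_d=R_d$ for all $d\gg0$ if and only if $\sqrt J=\m$.

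\emph{Forward direction.} If $J_d=R_d$ for all large $d$, then $J\supseteq\m^{d}$ for some $d$, since $\m^d=\bigoplus_{i\ge d}R_i$ in a standard graded ring. Hence $\sqrt J=\m$.

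\emph{Converse.} If $\sqrt J=\m$, then $\m^N\subseteq J$ for some $N$, because $R$ is Noetherian. Then $R_d=[\m^N]_d\subseteq J_d$ for every $d\ge N$. Thus $R^{(d)}$ is nearly Gorenstein for all $d\ge\max(N,c)$.

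One degenerate case needs care. If $[\tr_R(\omega_R)]_c=0$, then $J=0$, and $\sqrt J=\m$ would force $\dim R=0$. This contradicts $\dim R\ge2$, and both sides of the equivalence fail consistently, so the criterion still holds.

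The only step that needs attention is the identification $J_d=R_{d-c}[\tr_R(\omega_R)]_c$. It holds because $J$ is generated in degree $c$ and $R$ is standard graded. The rest is routine bookkeeping, so I do not expect a genuine obstacle.
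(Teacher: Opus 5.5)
Your proposal is correct and follows the paper's proof. For $d\ge -a(R)$ you read off $[\tr_{R^{(d)}}(\omega_{R^{(d)}})]_1=R_{d-c}[\tr_R(\omega_R)]_c=J_d$ from Proposition~\ref{prop:level-trace}, then use that $J_d=R_d$ for all large $d$ exactly when $\m^N\subseteq J$ for some $N$, i.e.\ when $\sqrt J=\m$. You only spell out steps the paper leaves implicit, and your separate treatment of the degenerate case $J=0$ is harmless but unnecessary, since the general argument already covers it.
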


\begin{proof}
For $d\ge-a(R)$, Proposition~\ref{prop:level-trace} gives $[\tr_{R^{(d)}}(\omega_{R^{(d)}})]_1=J_d$. Since $R^{(d)}$ is Cohen--Macaulay and standard graded, it is nearly Gorenstein precisely when this degree-one component is $R_d$. The last assertion follows because $J_d=R_d$ for all sufficiently large $d$ precisely when
$\sqrt J=\m$.
\end{proof}

\begin{remark}
Corollary~\ref{cor:level-large} complements
\cite{MiyVer}*{Theorem~1.2~(1)}: the latter gives a sufficient criterion without assuming levelness, whereas the former gives a necessary and sufficient criterion for level algebras with $a(R)<0$ in terms of $[\tr_R(\omega_R)]_{-a(R)}$.
\end{remark}

\section{Canonical traces of Veronese subalgebras}\label{sec:trace}

This section proves the two-sided estimates for canonical traces of Veronese subalgebras. We first establish the lower bound, then derive the upper bounds from graded duality and the $S_2$-ification, and finally record a counterexample and a general consequence.

We first record the statement used for the lower bound of Theorem~B.

\begin{proposition}\label{prop:transfer}
Let $S$ be a Noetherian positively graded algebra, let $M$ be a finite graded $S$-module, and set $\mathfrak n=(S_1)S$. If $I\subseteq\tr_S(M)$ is a graded ideal, then
$\tr_{S^{(d)}}(M^{(d)})
\supseteq
(\mathfrak n^{d-1}I)^{(d)}$
for every $d\ge1$.
\end{proposition}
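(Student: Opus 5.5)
We need to show that every homogeneous element of $(\mathfrak n^{d-1}I)^{(d)}$ is a sum of images $\Phi(v)$ with $v\in M^{(d)}$ and $\Phi\in\Hom_{S^{(d)}}(M^{(d)},S^{(d)})$. The plan is to work elementwise. A homogeneous element of $\mathfrak n^{d-1}I$ is a sum of terms $y\,\varphi(u)$ with $y$ a product of $d-1$ elements of $S_1$ (times elements of $S$), $\varphi\in\Hom_S(M,S)$ homogeneous, and $u\in M$ homogeneous. Here we use $I\subseteq\tr_S(M)$ and the fact that $\tr_S(M)$ is generated by $\varphi(u)$ with $\varphi,u$ homogeneous, since $M$ is finite graded. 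Degree-$nd$ pieces decompose into such terms of total degree $nd$. So it suffices to show that each such term $s\,\varphi(u)$, with $s\in\mathfrak n^{d-1}$ homogeneous and $\deg s+\deg\varphi+\deg u\equiv 0\pmod d$, lies in $\tr_{S^{(d)}}(M^{(d)})$.

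The key step is to split $s$ appropriately. Write $\deg u\equiv -t\pmod d$ with $0\le t<d$. Since $s\in\mathfrak n^{d-1}\subseteq\mathfrak n^{t}$, write $s=\sum_k z_k w_k$ with $z_k$ a product of exactly $t$ elements of $S_1$, so $\deg z_k=t$, and $w_k$ homogeneous. Then each term becomes $z_kw_k\varphi(u)=w_k\,\varphi(z_ku)$, where $z_ku\in M$ has degree $\equiv0\pmod d$, i.e.\ $z_ku\in M^{(d)}$.

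Next we build a map on $M^{(d)}$. Set $\Phi_k=w_k\varphi|_{M^{(d)}}$, i.e.\ $v\mapsto w_k\varphi(v)$. Its degree is $\deg w_k+\deg\varphi\equiv -\deg z_k-\deg u\equiv -t+t=0\pmod d$, using the total degree condition. Hence $\Phi_k$ sends $M^{(d)}$ into $S^{(d)}$, and it is clearly $S^{(d)}$-linear. Therefore $w_k\varphi(z_ku)=\Phi_k(z_ku)\in\tr_{S^{(d)}}(M^{(d)})$. Summing over $k$ and over the terms gives the inclusion.

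The main obstacle is bookkeeping. One must verify that the decomposition $s=\sum z_kw_k$ exists, which holds because $\mathfrak n^{t}$ is generated by degree-$t$ monomials in $S_1$ and $t\le d-1$. One must also handle the case where $w_k$ has degree not matching directly. This is harmless: only the residue of $\deg w_k+\deg\varphi$ modulo $d$ matters, and maps of any degree divisible by $d$ (including negative degrees) are allowed in $\Hom_{S^{(d)}}$. No regularity or $(S_2)$ hypothesis is needed, consistent with the remark that the lower bound of Theorem~B requires neither.
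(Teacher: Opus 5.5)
Your proof is correct and is essentially the paper's argument. The paper reduces to a term $x_1\cdots x_{d-1}\theta$ with $\theta=\sum_j\varphi_j(u_j)\in I$, and for each $j$ splits the linear factors after $r_j\equiv-\deg u_j\pmod d$ of them: the first group moves $u_j$ into $M^{(d)}$, and the remaining factors multiply $\varphi_j$ into an $S^{(d)}$-linear map, exactly as your $z_k$ and $w_k$ do (you absorb the extra coefficient into $w_k$ rather than into $\theta$, and you write out the degree check explicitly).
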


\begin{proof}
It is enough to consider a homogeneous term $x_1\cdots x_{d-1}\theta$, where $x_i\in S_1$, $\theta\in I$, and $\deg\theta\equiv1\pmod d$. Write
$\theta=\sum_j\varphi_j(u_j)$
with homogeneous $u_j$ and homogeneous maps $\varphi_j:M\to S$. Choose $r_j\in\{0,\ldots,d-1\}$ such that $\deg u_j+r_j\in d\ZZ$. Splitting $x_1\cdots x_{d-1}$ after $r_j$ factors moves $u_j$ into $M^{(d)}$, while multiplication of $\varphi_j$ by the remaining factors gives an $S^{(d)}$-linear map to $S^{(d)}$. Summing these maps yields the desired element of the trace.
\end{proof}

\begin{remark}
Proposition~\ref{prop:transfer} extends the positive-dimensional statement in~\cite{MiyVer}*{Theorem~3.4}.
\end{remark}

\begin{corollary}\label{cor:artinian}
Let $A$ be an Artinian standard graded nearly Gorenstein algebra. Then $A^{(d)}$ is nearly Gorenstein for every $d\ge1$.
\end{corollary}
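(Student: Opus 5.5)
The plan is to apply Proposition~\ref{prop:transfer} with $S=A$, $M=\omega_A$ and $I=\m$, where $\m=A_+=(A_1)A$ is the graded maximal ideal of $A$. Since $A$ is nearly Gorenstein, $\m\subseteq\tr_A(\omega_A)$. With $\mathfrak n=(A_1)A=\m$, Proposition~\ref{prop:transfer} then gives
\[
\tr_{A^{(d)}}\bigl((\omega_A)^{(d)}\bigr)\supseteq(\m^{d-1}\m)^{(d)}=(\m^d)^{(d)}.
\]
Because $A$ is standard graded, $\m^d=\bigoplus_{i\ge d}A_i$. Hence $(\m^d)^{(d)}=\bigoplus_{n\ge1}A_{nd}$, which is precisely the graded maximal ideal $\m_d$ of $A^{(d)}$.

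Next I pass from $(\omega_A)^{(d)}$ to $\omega_{A^{(d)}}$. I use the Goto--Watanabe formula $\omega_{A^{(d)}}\cong(\omega_A)^{(d)}$ from \cite{GW}*{Corollary~(3.1.3)}, which the paper states for positively graded algebras with no dimension hypothesis. In the Artinian case this is also visible directly. Here $\omega_A$ is the graded Matlis dual $\Hom_\kk(A,\kk)$. Its $d$-th Veronese is $\bigoplus_j\Hom_\kk(A_{-jd},\kk)$, which is the graded $\kk$-dual of $A^{(d)}$, i.e.\ $\omega_{A^{(d)}}$. The trace is invariant under graded isomorphism, so $\m_d\subseteq\tr_{A^{(d)}}(\omega_{A^{(d)}})$. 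Since $A^{(d)}$ is Artinian, hence Cohen--Macaulay, this is exactly the statement that $A^{(d)}$ is nearly Gorenstein.

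The main point to check is that Proposition~\ref{prop:transfer} really needs no dimension or $(S_2)$ hypothesis, in contrast to Theorem~\ref{thm:dual}. Its proof only builds elements of the trace by splitting the factor $x_1\cdots x_{d-1}$ and multiplying maps by elements of $S_1$. Nothing is extended from $M^{(d)}$ back to $M$, so the Artinian case goes through. The remaining check is the Veronese identification of the canonical module in dimension zero, which the direct Matlis-duality computation above settles.
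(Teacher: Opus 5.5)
Your proposal is correct and follows the paper's proof exactly. You apply Proposition~\ref{prop:transfer} with $M=\omega_A$ and $I=\m$ to get $(\m^d)^{(d)}=\m_d\subseteq\tr_{A^{(d)}}((\omega_A)^{(d)})$, and then finish with the Goto--Watanabe identification $\omega_{A^{(d)}}\cong(\omega_A)^{(d)}$. Your direct graded Matlis-duality check of that identification in dimension zero is a valid, self-contained supplement to the citation.
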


\begin{proof}
Let $\m_A$ be the graded maximal ideal. Since $\m_A\subseteq\tr_A(\omega_A)$, Proposition~\ref{prop:transfer} gives
\[
\tr_{A^{(d)}}((\omega_A)^{(d)})
\supseteq
(\m_A^d)^{(d)}
=
\m_{A^{(d)}}.
\]
Since $\omega_{A^{(d)}}\cong(\omega_A)^{(d)}$, the assertion follows.
\end{proof}

\begin{corollary}\label{cor:all-dim}
Let $R$ be a standard graded Cohen--Macaulay algebra. If $R$ is nearly Gorenstein, then $R^{(d)}$ is nearly Gorenstein for every $d\ge1$.
\end{corollary}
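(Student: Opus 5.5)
The plan is to apply Proposition~\ref{prop:transfer} directly, exactly as in the proof of Corollary~\ref{cor:artinian}. No Artinian or dimension hypothesis will be needed, because that proposition holds for arbitrary finite graded modules.

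Let $\m$ be the graded maximal ideal of $R$. Since $R$ is standard graded, $\m=(R_1)R$ is precisely the ideal $\mathfrak n$ of Proposition~\ref{prop:transfer}. The nearly Gorenstein hypothesis says that $I=\m\subseteq\tr_R(\omega_R)$. Applying Proposition~\ref{prop:transfer} with $S=R$, $M=\omega_R$ and $I=\m$ gives
\[
\tr_{R^{(d)}}\bigl((\omega_R)^{(d)}\bigr)\supseteq(\m^{d-1}\m)^{(d)}=(\m^d)^{(d)}.
\]

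Next we identify $(\m^d)^{(d)}$ with the graded maximal ideal $\m_d$ of $R^{(d)}$. Because $R$ is standard graded, $[\m^d]_{jd}=R_{jd}$ for every $j\ge1$, while $[\m^d]_0=0$. Hence $(\m^d)^{(d)}=\bigoplus_{j\ge1}R_{jd}=\m_d$.

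Finally, Goto--Watanabe's formula $\omega_{R^{(d)}}\cong(\omega_R)^{(d)}$ \cite{GW}*{Corollary~(3.1.3)} identifies the two traces, so $\m_d\subseteq\tr_{R^{(d)}}(\omega_{R^{(d)}})$. We also need $R^{(d)}$ to be Cohen--Macaulay, since nearly Gorenstein is a notion for Cohen--Macaulay rings. This follows from the standard fact that Veronese submodules are direct summands as $R^{(d)}$-modules: $R=\bigoplus_r R^{\langle d,r\rangle}$, so $H^i_{\m_d}(R^{(d)})$ is a direct summand of $H^i_{\m}(R)$, which vanishes for $i<\dim R$. Therefore $R^{(d)}$ is nearly Gorenstein.

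There is no real obstacle. The only points requiring care are verifying $(\m^d)^{(d)}=\m_d$ in the standard graded setting and noting that Cohen--Macaulayness passes to $R^{(d)}$.
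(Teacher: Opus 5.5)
Your proof is correct, but it is organized differently from the paper's. The paper splits by dimension. For $\dim R>0$ it simply cites \cite{MiyVer}*{Theorem~5.6~(2)~(b)}, and only in dimension zero does it invoke Proposition~\ref{prop:transfer}, via Corollary~\ref{cor:artinian}.

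You observe that the Artinian argument never uses $\dim R=0$. Proposition~\ref{prop:transfer} holds for arbitrary finite graded modules. The identity $(\m^d)^{(d)}=\m_d$ holds for every standard graded $R$. The identification $\omega_{R^{(d)}}\cong(\omega_R)^{(d)}$ is available in all dimensions. So the same short argument handles every case at once.

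Your route makes the corollary self-contained within the paper and removes the case split. This fits the paper's remark that Proposition~\ref{prop:transfer} extends \cite{MiyVer}*{Theorem~3.4}, and its comment in the introduction that the case $\ell\le1$ of Theorem~B recovers this result. You also make explicit that $R^{(d)}$ is Cohen--Macaulay, which the paper leaves implicit. In that step, spell out that $H^i_{\m_d}(R)\cong H^i_{\m}(R)$, because $\sqrt{\m_dR}=\m$ and local cohomology does not depend on the base ring; your direct-summand argument then goes through.

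The paper's version is shorter only because it defers the positive-dimensional case to earlier literature rather than re-deriving it.
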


\begin{proof}
For positive dimension this is~\cite{MiyVer}*{Theorem~5.6~(2)~(b)}; dimension zero is Corollary~\ref{cor:artinian}.
\end{proof}

We next separate the lower estimate from the upper estimate and pass to the $S_2$-ification. We use \emph{unmixed} to mean that $\dim R/\mathfrak p=\dim R$ for every $\mathfrak p\in\operatorname{Ass}R$; in particular, an unmixed ring is equidimensional.
Let $R$ be a standard graded unmixed ring and let $\m=R_+$. By a \emph{graded $S_2$-ification} of $R$ we mean a finite graded $R$-algebra $S$ such that
$R\subseteq S\subseteq Q(R)$,
$S$ satisfies $(S_2)$, and $S_{\mathfrak p}=R_{\mathfrak p}$ for every homogeneous prime $\mathfrak p$ of $R$ with $\operatorname{ht}\mathfrak p\le1$. From this point on, set
$\widetilde R:=\End_R(\omega_R)$
with its natural grading.

\begin{remark}\label{rem:s2ification}
We now recall why $\widetilde R$ is the graded $S_2$-ification of $R$ in the above sense. Since $\omega_R$ is finite, $\widetilde R$ is a finite graded $R$-algebra. For every homogeneous prime $\mathfrak p\subseteq\m$, localization of graded Hom gives
$\widetilde R_{\mathfrak p}\cong\End_{R_{\mathfrak p}}((\omega_R)_{\mathfrak p})$.
The local ring $R_{\m}$ is equidimensional and unmixed.
Hence the natural map $R\to\widetilde R$ is injective, and $\widetilde R$ may be identified with a graded subring of $Q(R)$; see~\cite{HH}*{(2.1), (2.2)~(e), (f)}. Then \cite{Aoyama}*{Corollary~4.3 and Theorem~3.2} show that every homogeneous localization of $\widetilde R$ satisfies $(S_2)$ and that $\omega_R$, with its natural $\widetilde R$-module structure, is a graded canonical module of $\widetilde R$. If $\operatorname{ht}\mathfrak p\le1$, then $R_{\mathfrak p}$ is already $(S_2)$, so~\cite{AG}*{Proposition~1.2} gives $\widetilde R_{\mathfrak p}=R_{\mathfrak p}$. Thus $\widetilde R$ is the finite graded $S_2$-ification of $R$; compare~\cite{HH}*{(2.2), (2.5)--(2.7)}. Since $(\widetilde R)_0$ is finite-dimensional over $\kk$, Theorem~\ref{thm:dual} applies to the finite graded extension $R\subseteq\widetilde R$.
\end{remark}

\begin{lemma}\label{lem:conductor}
Let $R$ be a standard graded unmixed algebra with graded maximal ideal $\m$. Then
$\tr_R(\omega_R)\subseteq(R:\widetilde R)$.
Consequently, if $\m^\ell\subseteq\tr_R(\omega_R)$ for some $\ell\ge0$, then
$\m^\ell(\widetilde R/R)=0$, and hence $\widetilde R/R$ has finite length.
\end{lemma}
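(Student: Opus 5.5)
The plan is to show that every element of $\tr_R(\omega_R)$ multiplies $\widetilde R$ into $R$. By Remark~\ref{rem:s2ification}, $\widetilde R=\End_R(\omega_R)$ is a finite graded subring of $Q(R)$ containing $R$, and $\omega_R$ is a faithful $\widetilde R$-module via the natural action. It therefore suffices to treat a single element $\varphi(u)$, with $u\in\omega_R$ homogeneous and $\varphi\in\Hom_R(\omega_R,R)$ homogeneous, and to show $\varphi(u)\,s\in R$ for every $s\in\widetilde R$.

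The key step is to prove that every $R$-linear map $\varphi\colon\omega_R\to R$ is automatically $\widetilde R$-linear, in the sense that $\varphi(su)=s\varphi(u)$ inside $Q(R)$. Granting this, we get
\[
s\varphi(u)=\varphi(su)\in R,
\]
because $su\in\omega_R$ by the very definition of $\widetilde R$ as $\End_R(\omega_R)$. Summing over such elements then gives $\tr_R(\omega_R)\widetilde R\subseteq R$, which is the inclusion $\tr_R(\omega_R)\subseteq(R:\widetilde R)$.

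To establish the $\widetilde R$-linearity, I would localize at the minimal primes, or equivalently pass to $Q(R)$. Since $R$ is unmixed, the associated primes of $R$ are its minimal primes, so the map $R\to Q(R)$ is injective and $Q(R)$ is Artinian. Moreover, $\widetilde R\otimes_R Q(R)=Q(R)$, since $\widetilde R_{\mathfrak p}=R_{\mathfrak p}$ at height-zero primes. Now write $s=a/t$ with $t$ a nonzerodivisor, so that $ts=a\in R$. Then
\[
t\,\varphi(su)=\varphi(tsu)=\varphi(au)=a\varphi(u)=ts\varphi(u).
\]
Because $t$ is a nonzerodivisor on $Q(R)$, cancelling $t$ gives $\varphi(su)=s\varphi(u)$. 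One point to check here is that $su$ is computed compatibly: the action of $s\in\End_R(\omega_R)$ on $\omega_R$ has to agree with multiplication by the fraction $a/t$. This holds because $t\cdot(s\cdot u)=(ts)\cdot u=a u$, using that the identification of $\widetilde R$ with a subring of $Q(R)$ is exactly through this action, together with the fact that $\omega_R$ is torsion-free with respect to nonzerodivisors of $R$. The latter follows from $\operatorname{Ass}\omega_R\subseteq\operatorname{Min}R$ (Aoyama). This compatibility is the main technical point, and I expect it to be the only delicate step.

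For the consequence, $\m^\ell\subseteq\tr_R(\omega_R)\subseteq(R:\widetilde R)$ gives $\m^\ell\widetilde R\subseteq R$, so $\m^\ell(\widetilde R/R)=0$. Since $\widetilde R/R$ is a finite graded $R$-module annihilated by a power of $\m$, it has finite length.
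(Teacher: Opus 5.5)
Your argument is correct, but it takes a different route from the paper. The paper does not prove the inclusion directly. It localizes at $\m$ and uses the faithfulness of $(\omega_R)_{\m}$ to verify the hypothesis $U_{R_{\m}}(0)=0$ of Aoyama--Goto's (3.8). It then imports from that result the containment of the canonical trace in the conductor of $\End(\omega)$ for the local ring $R_{\m}$. Finally, it descends to $R$ because trace and conductor commute with localization and both are graded ideals.

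You instead prove the key fact from scratch, using only the identification $R\subseteq\widetilde R\subseteq Q(R)$ of Remark~\ref{rem:s2ification}. Clearing denominators shows that every $\varphi\in\Hom_R(\omega_R,R)$ is automatically $\widetilde R$-linear, so $s\varphi(u)=\varphi(su)\in R$. This is self-contained and works globally, with no localization at $\m$ and no graded descent. It also yields the slightly stronger statement $\Hom_R(\omega_R,R)\subseteq\Hom_{\widetilde R}(\omega_R,\widetilde R)$. This is the same mechanism the paper later uses, in Veronese form, in the proof of Proposition~\ref{prop:upper-hull}. The paper's version is shorter only because it delegates this step to the literature.

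The step you flag as delicate is in fact immediate. Since $\widetilde R\hookrightarrow Q(R)$ is an injective ring map extending $R\subseteq Q(R)$, the equality $ts=a$ in $Q(R)$ already forces the composite of $s$ with multiplication by $t$ to equal multiplication by $a$ on $\omega_R$. Torsion-freeness of $\omega_R$ is needed only to construct that embedding, which the Remark supplies. Your observations that $Q(R)$ is Artinian and that $\widetilde R\otimes_R Q(R)=Q(R)$ are true but play no role in the argument.
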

\begin{proof}
Since $R_{\m}$ is equidimensional and unmixed, $(\omega_R)_{\m}$ is faithful by~\cite{HH}*{(2.2)~(e)}, so the hypothesis $U_{R_{\m}}(0)=0$ in~\cite{AG}*{(3.8)} is satisfied. After localizing at $\m$, that inclusion gives the desired containment. Trace and conductor commute with localization, and both sides are graded ideals, so the containment holds over $R$. The remaining assertions are immediate.
\end{proof}

\begin{proposition}\label{prop:upper-hull}
Let $R$ be a standard graded unmixed generically Gorenstein algebra of dimension at least two. Then, for every $d\ge1$,
\[
\tr_{R^{(d)}}(\omega_{R^{(d)}})\subseteq\m_d^{\,\lceil-a(R)/d\rceil+\lceil b(\widetilde R)/d\rceil}.
\]
If $\m^\ell\subseteq\tr_R(\omega_R)$, then $b(\widetilde R)\ge b(R)-\ell$, and consequently
\[
\tr_{R^{(d)}}(\omega_{R^{(d)}})\subseteq\m_d^{\,\max\{0,\,\lceil-a(R)/d\rceil+\lceil(b(R)-\ell)/d\rceil\}}.
\]
\end{proposition}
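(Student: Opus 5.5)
The plan is to bound the trace by initial degrees, working over the $S_2$-ification. Since $\widetilde R$ is a finite graded $R$-algebra containing $R$ (Remark~\ref{rem:s2ification}), and $\omega_R$ is a graded canonical module of $\widetilde R$, the same is true after passing to Veronese subalgebras. By Goto--Watanabe, $\omega_{R^{(d)}}\cong(\omega_R)^{(d)}$, and this is also $\omega_{\widetilde R^{(d)}}$.

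\textbf{Step 1 (reduce to $\widetilde R^{(d)}$).} Every $R^{(d)}$-linear map $\varphi\colon(\omega_R)^{(d)}\to R^{(d)}$ can be composed with the inclusion $R^{(d)}\subseteq\widetilde R^{(d)}$. This shows that $\tr_{R^{(d)}}(\omega_{R^{(d)}})$ is contained in the $R^{(d)}$-span of images of maps $(\omega_R)^{(d)}\to\widetilde R^{(d)}$.

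\textbf{Step 2 (these maps are $\widetilde R^{(d)}$-linear).} I would check this by embedding into total quotient rings. Since $R$ is generically Gorenstein and unmixed, $\omega_R$ has rank one and is isomorphic to a fractional ideal. Hence $\Hom_{R^{(d)}}((\omega_R)^{(d)},\widetilde R^{(d)})$ consists of multiplications by elements of $Q(R)$, and such maps commute with $\widetilde R^{(d)}\subseteq Q(R)$. Alternatively, a map of $R_{\mathfrak p}$-modules between modules over $\widetilde R_{\mathfrak p}=R_{\mathfrak p}$ at height $\le1$ extends by the $(S_2)$ property.

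Given this, each such map lies in $\Hom_{\widetilde R^{(d)}}(\omega_{\widetilde R^{(d)}},\widetilde R^{(d)})$. By Theorem~\ref{thm:dual} applied to $R\subseteq\widetilde R$, this module is $\Hom_{\widetilde R}(\omega_R,\widetilde R)^{(d)}$, with initial degree $\lceil b(\widetilde R)/d\rceil$. Also $\indeg(\omega_R)^{(d)}=\lceil -a(R)/d\rceil$. Every homogeneous element of the image therefore has degree at least $N=\lceil-a(R)/d\rceil+\lceil b(\widetilde R)/d\rceil$ and lies in $R^{(d)}$. Since $R^{(d)}$ is standard graded, its elements of degree $\ge N$ lie in $\m_d^{N}$ (for $N\le0$ the claim is trivial), which gives the first inclusion.

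\textbf{Step 3 ($b(\widetilde R)\ge b(R)-\ell$).} Take a nonzero homogeneous $\psi\colon\omega_R\to\widetilde R$ of degree $j$. By Lemma~\ref{lem:conductor}, $\m^\ell\widetilde R\subseteq R$. Pick $y\in R_\ell$ with $y\psi\ne0$, using that $\widetilde R\subseteq Q(R)$ is torsion-free and that $R_1$ contains a nonzerodivisor $x$, so $y=x^\ell$ works. Then $y\psi\colon\omega_R\to R$ is nonzero of degree $j+\ell$, so $j+\ell\ge b(R)$. Substituting into Step 2 and using $\lceil b(\widetilde R)/d\rceil\ge\lceil (b(R)-\ell)/d\rceil$ gives the second inclusion, with the $\max\{0,\cdot\}$ accounting for the trivial case.

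\textbf{Main obstacle.} The delicate point is Step 2: that $R^{(d)}$-linear maps into $\widetilde R^{(d)}$ are automatically $\widetilde R^{(d)}$-linear, and that $\widetilde R^{(d)}$ has canonical module $(\omega_R)^{(d)}$. I would settle the first through the rank-one fractional-ideal description, which is where the generically Gorenstein hypothesis enters.
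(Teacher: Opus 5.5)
Your proposal is correct and follows the paper's proof essentially step for step: identify $\omega_R$ with a homogeneous canonical ideal $I$, show that every map out of $\omega_{R^{(d)}}\cong I^{\langle d,r\rangle}$ is multiplication by some $z\in Q(R)$ and is therefore $\widetilde R^{(d)}$-linear, apply Theorem~\ref{thm:dual} to $R\subseteq\widetilde R$, and then get $b(R)\le b(\widetilde R)+\ell$ from $x^\ell\psi$ and Lemma~\ref{lem:conductor}. Two details you leave implicit are spelled out in the paper: the cross-multiplication by $x^{\delta}$ (with $\delta=d-r$ when $r\ne0$) showing that maps on $I^{\langle d,r\rangle}$ are multiplications, and the preliminary base change to an infinite field, which guarantees that a linear nonzerodivisor exists.
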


\begin{proof}
After a faithfully flat extension we may assume that $\kk$ is infinite. By~\cite{AG}*{(3.1)}, applied to $R_{\m}$, and using the grading, choose a homogeneous canonical ideal $I\subseteq R$ containing a nonzerodivisor and an integer $s$ such that $\omega_R\cong I(s)$, and write $s=ad+r$ with $0\le r<d$. By Remark~\ref{rem:s2ification}, $\omega_R$ is also a graded canonical module of $\widetilde R$, with its natural $\widetilde R$-module structure. Goto--Watanabe's formula therefore identifies both $\omega_{R^{(d)}}$ and $\omega_{\widetilde R^{(d)}}$, up to the same shift, with $I^{\langle d,r\rangle}$.

Let $\varphi:I^{\langle d,r\rangle}\to R^{(d)}$ be homogeneous. Choose a linear nonzerodivisor $x\in R_1$. After multiplying a homogeneous nonzerodivisor of $I$ by a suitable power of $x$, choose a homogeneous nonzerodivisor $c\in I^{\langle d,r\rangle}$. As in the proof of Proposition~\ref{prop:residue}, cross-multiplication after multiplying by $x^\delta$, where $\delta=0$ if $r=0$ and $\delta=d-r$ otherwise, shows that $\varphi$ is multiplication by a homogeneous element $z\in Q(R)$. Since $I$ is a $\widetilde R$-module, $I^{\langle d,r\rangle}$ is a $\widetilde R^{(d)}$-module; multiplication by $z$ is therefore $\widetilde R^{(d)}$-linear. Its image lies in $R^{(d)}\subseteq\widetilde R^{(d)}$, and hence
$\Hom_{R^{(d)}}(\omega_{R^{(d)}},R^{(d)})
\subseteq
\Hom_{\widetilde R^{(d)}}(\omega_{\widetilde R^{(d)}},\widetilde R^{(d)})$.
Theorem~\ref{thm:dual} gives $b(\widetilde R^{(d)})=\lceil b(\widetilde R)/d\rceil$. Since $R$ is unmixed, the graded form of~\cite{Aoyama}*{(1.7)} gives $\operatorname{Ass}_R\omega_R=\operatorname{Assh}R=\operatorname{Ass}R$, so the chosen linear nonzerodivisor $x$ is also $\omega_R$-regular. Multiplication by $x$ therefore shows that $(\omega_R)_j\ne0$ for every $j\ge\indeg\omega_R=-a(R)$. Together with $\omega_{R^{(d)}}\cong(\omega_R)^{(d)}$, this gives $\indeg\omega_{R^{(d)}}=\lceil-a(R)/d\rceil$. Thus the source and the graded dual in the evaluation map have initial degrees at least $\lceil-a(R)/d\rceil$ and $\lceil b(\widetilde R)/d\rceil$, respectively. Since $R^{(d)}$ is standard graded, the evaluation description of the trace gives the first containment.

Assume now that $\m^\ell\subseteq\tr_R(\omega_R)$. Choose the linear form $x$ above to be a nonzerodivisor on $R$; it is then also a nonzerodivisor on $\widetilde R\subseteq Q(R)$. If $0\ne\psi\in\Hom_{\widetilde R}(\omega_R,\widetilde R)_{b(\widetilde R)}$, Lemma~\ref{lem:conductor} gives $\m^\ell\widetilde R\subseteq R$, so $x^\ell\psi\ne0$ lies in $\Hom_R(\omega_R,R)$ in degree $b(\widetilde R)+\ell$. Thus $b(R)\le b(\widetilde R)+\ell$.
\end{proof}

Before stating the sharp upper estimate, we recall the $b$-invariant introduced by Okuma--Watanabe--Yoshida~\cite{OWY}*{Section~2}. For a finitely generated normal graded domain $T$ with fractional canonical ideal $K_T$, they define $b(T)=\indeg K_T^{-1}$; in the present setting, we use $b(T)=\indeg\Hom_T(\omega_T,T)$, which agrees with their definition in the normal-domain case.

\begin{proposition}\label{prop:upper}
Let $R$ be a standard graded equidimensional algebra of dimension at least two satisfying $(S_2)$, with graded maximal ideal $\m$. Then, for every $d\ge1$,
$b(R^{(d)})=\left\lceil\frac{b(R)}d\right\rceil$
and
$$\tr_{R^{(d)}}(\omega_{R^{(d)}})
\subseteq
\m_d^{\,\lceil-a(R)/d\rceil+\lceil b(R)/d\rceil}.$$
\end{proposition}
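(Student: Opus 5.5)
The plan is to apply Theorem~\ref{thm:dual} with $A=B=R$ and then bound the trace by initial degrees.

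\textbf{Step 1: the $b$-invariant formula.} Take $A=B=R$ in Theorem~\ref{thm:dual}. The inclusion $R\subseteq R$ is module-finite and degree-preserving, and $R$ is standard graded. By hypothesis $R$ is equidimensional, satisfies $(S_2)$, and has dimension at least two. Theorem~\ref{thm:dual} therefore gives
\[
\Hom_{R^{(d)}}(\omega_{R^{(d)}},R^{(d)})\cong\Hom_R(\omega_R,R)^{(d)}
\qquad\text{and}\qquad
b(R^{(d)})=\lceil b(R)/d\rceil .
\]
If $\Hom_R(\omega_R,R)=0$, the trace is zero and the containment is trivial. So assume this module is nonzero.

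\textbf{Step 2: the initial degree of the canonical module.} Pass to an infinite ground field by faithfully flat base change. As explained just before Lemma~\ref{lem:intersection}, a linear nonzerodivisor $x\in R_1$ is also $\omega_R$-regular, by the graded form of \cite{Aoyama}*{(1.7)}; this uses that $R$ is equidimensional and $(S_2)$. Hence $(\omega_R)_j\ne0$ for every $j\ge -a(R)$. Combined with the Goto--Watanabe identification $\omega_{R^{(d)}}\cong(\omega_R)^{(d)}$, this gives
\[
\indeg\omega_{R^{(d)}}=\lceil -a(R)/d\rceil .
\]

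\textbf{Step 3: the containment.} The trace $\tr_{R^{(d)}}(\omega_{R^{(d)}})$ is spanned by the elements $\varphi(u)$ with $u\in\omega_{R^{(d)}}$ and $\varphi$ homogeneous of degree at least $b(R^{(d)})$. Each such element has degree at least
\[
N:=\lceil -a(R)/d\rceil+\lceil b(R)/d\rceil .
\]
Since $R^{(d)}$ is standard graded, every homogeneous element of degree $n\ge0$ lies in $\m_d^{\,n}$ (or in $\m_d^0$ when $n\le0$). Hence the trace lies in $\m_d^{\,N}$; when $N\le0$ we read $\m_d^{\,N}$ as $R^{(d)}$. Since the trace lies in $R^{(d)}$, which is concentrated in nonnegative degrees, degree considerations cause no sign issues.

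\textbf{Main obstacle.} The only delicate point is Step~2, namely guaranteeing $\omega_R$-regularity of a linear form so that no gaps occur in the degrees of $\omega_R$. This is already supplied earlier in the paper. Everything else follows directly from Theorem~\ref{thm:dual}.
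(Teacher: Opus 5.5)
Your proposal is correct and follows essentially the same route as the paper. You apply Theorem~\ref{thm:dual} with $A=B=R$ to get the $b$-invariant formula. You use the absence of gaps in the degrees of $\omega_R$, via an $\omega_R$-regular linear form, to get $\indeg\omega_{R^{(d)}}=\lceil-a(R)/d\rceil$. The containment then follows from the evaluation/degree description of the trace, which the paper cites from \cite{Miyazaki}*{Lemma~2.2} and you verify directly, taking $u$ homogeneous.
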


\begin{proof}
The equality for $b(R^{(d)})$ is Theorem~\ref{thm:dual} applied with $A=B=R$. Since $\indeg\omega_{R^{(d)}}=\lceil-a(R)/d\rceil$, the evaluation description of the trace~\cite{Miyazaki}*{Lemma~2.2} gives the containment.
\end{proof}

Under the hypotheses of Theorem~B, the quotient $R/\tr_R(\omega_R)$ has finite length. At every minimal prime $\mathfrak p$, localization gives $\tr_{R_{\mathfrak p}}((\omega_R)_{\mathfrak p})=R_{\mathfrak p}$, and hence $(\omega_R)_{\mathfrak p}\ne0$. Thus every minimal prime belongs to $\operatorname{Supp}_R\omega_R$, and~\cite{Aoyama}*{(1.7)} shows that $R$ is equidimensional. Moreover, for every prime $\mathfrak p\ne\m$, the module $(\omega_R)_{\mathfrak p}$ is canonical by~\cite{Aoyama}*{Corollary~4.3}, and~\cite{AG}*{Proposition~3.3} shows that $R_{\mathfrak p}$ is quasi-Gorenstein; see also~\cite{KM}*{Remark~2.9~(2), (5)}. Thus Proposition~\ref{prop:upper} applies, and Theorem~B follows from Propositions~\ref{prop:upper} and~\ref{prop:transfer}. We now record the stronger form valid for unmixed rings.

\begin{theorem}\label{thm:main}
Let $R$ be a standard graded unmixed algebra of dimension at least two, with graded maximal ideal $\m$. Assume that $\ell=\min\{j\ge0:\m^j\subseteq\tr_R(\omega_R)\}<\infty$. Then, for every $d\ge1$,
\[
\m_d^{\,1+\lceil(\ell-1)/d\rceil}\subseteq\tr_{R^{(d)}}(\omega_{R^{(d)}})\subseteq\m_d^{\,\lceil-a(R)/d\rceil+\lceil b(\widetilde R)/d\rceil}.
\]
Moreover $b(\widetilde R)\ge b(R)-\ell$, so the right-hand side may be weakened to $\m_d^{\,\max\{0,\lceil-a(R)/d\rceil+\lceil(b(R)-\ell)/d\rceil\}}$. If $R$ satisfies $(S_2)$, then $\widetilde R=R$ and the sharp upper exponent is $\lceil-a(R)/d\rceil+\lceil b(R)/d\rceil$.
\end{theorem}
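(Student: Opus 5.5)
The plan is to prove the two inclusions separately. The lower one comes from Proposition~\ref{prop:transfer}, and the upper one from Proposition~\ref{prop:upper-hull}, once its hypotheses have been checked.

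\emph{Lower bound.} Apply Proposition~\ref{prop:transfer} with $S=R$, $M=\omega_R$ and $I=\m^\ell\subseteq\tr_R(\omega_R)$. Since $R$ is standard graded, $\mathfrak n=\m$. This gives
\[
\tr_{R^{(d)}}\bigl((\omega_R)^{(d)}\bigr)\supseteq(\m^{d-1+\ell})^{(d)}.
\]
Because $R$ is standard graded, $(\m^{k})^{(d)}$ consists of all elements of $R^{(d)}$ of degree $nd\ge k$. It therefore equals $\m_d^{\,n_0}$ with $n_0=\lceil k/d\rceil$, and $\m_d^{\,n_0}$ is generated by $R_{n_0d}$ since $R^{(d)}$ is standard graded. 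With $k=d-1+\ell$ we get $n_0=1+\lceil(\ell-1)/d\rceil$. This also covers $\ell=0$, where $n_0=1$. Combining with the Goto--Watanabe identification $\omega_{R^{(d)}}\cong(\omega_R)^{(d)}$ gives the left inclusion. No unmixedness is used here.

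\emph{Upper bound.} The first containment and the inequality $b(\widetilde R)\ge b(R)-\ell$ are exactly Proposition~\ref{prop:upper-hull}. The step needing care is its extra hypothesis that $R$ is generically Gorenstein. Let $\mathfrak p$ be a minimal prime. Since $\ell<\infty$ and $\dim R\ge2$, we have $\mathfrak p\ne\m$, so localizing gives $\tr_{R_{\mathfrak p}}((\omega_R)_{\mathfrak p})=R_{\mathfrak p}$. As $R$ is unmixed, $(\omega_R)_{\mathfrak p}$ is the canonical module of the Artinian local ring $R_{\mathfrak p}$ by \cite{Aoyama}*{Corollary~4.3}. A trace equal to the whole local ring means $(\omega_R)_{\mathfrak p}$ has a free summand $R_{\mathfrak p}$. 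Matsui's length equality $\lambda(\omega)=\lambda(R_{\mathfrak p})$ for Artinian rings then forces $(\omega_R)_{\mathfrak p}\cong R_{\mathfrak p}$, so $R_{\mathfrak p}$ is Gorenstein. Alternatively, \cite{AG}*{Proposition~3.3} directly gives quasi-Gorenstein, which is Gorenstein in dimension zero. Hence Proposition~\ref{prop:upper-hull} applies, giving the right inclusion and the weakened exponent. The $\max\{0,\cdot\}$ is harmless since $\m_d^0=R^{(d)}$.

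\emph{The $(S_2)$ case.} Here $R$ satisfies $(S_2)$, so $R_{\mathfrak p}$ is $(S_2)$ for every prime. By \cite{AG}*{Proposition~1.2}, as in Remark~\ref{rem:s2ification}, the inclusion $R\to\widetilde R$ is an isomorphism at every homogeneous prime, so $\widetilde R=R$ and $b(\widetilde R)=b(R)$. This recovers Proposition~\ref{prop:upper}, whose exponent is $\lceil-a(R)/d\rceil+\lceil b(R)/d\rceil$.

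The main obstacle is verifying the generically Gorenstein hypothesis from $\ell<\infty$. The remaining steps are bookkeeping with ceilings and standard gradedness.
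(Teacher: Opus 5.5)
Your proof is correct and follows essentially the same route as the paper. The lower bound comes from Proposition~\ref{prop:transfer} with $I=\m^\ell$ plus the ceiling identity, the upper bounds come from Proposition~\ref{prop:upper-hull} once $\tr_{R_{\mathfrak p}}((\omega_R)_{\mathfrak p})=R_{\mathfrak p}$ at minimal primes gives generic Gorensteinness, and the $(S_2)$ case uses $\widetilde R=R$. Your free-summand argument combined with the length equality $\lambda(\omega)=\lambda(R_{\mathfrak p})$ (this is Matlis duality, not ``Matsui'') is an elementary substitute for the paper's appeal to \cite{AG}*{Proposition~3.3}. One trivial slip: for $\ell=0$ and $d=1$ the exponent is $n_0=0$, not $1$; your general formula $n_0=\lceil(d-1+\ell)/d\rceil=1+\lceil(\ell-1)/d\rceil$ already handles this correctly.
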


\begin{proof}
The inclusion $\m^\ell\subseteq\tr_R(\omega_R)$ says that $R/\tr_R(\omega_R)$ has finite length. Since $R$ is equidimensional and unmixed, $\omega_R$ is faithful by~\cite{HH}*{(2.2)~(e)}, and $(\omega_R)_{\mathfrak p}$ is a canonical module of $R_{\mathfrak p}$ for every prime $\mathfrak p$ by~\cite{Aoyama}*{Corollary~4.3}. For every nonmaximal prime $\mathfrak p$, localization gives $\tr_{R_{\mathfrak p}}((\omega_R)_{\mathfrak p})=R_{\mathfrak p}$, so~\cite{AG}*{Proposition~3.3} shows that $R_{\mathfrak p}$ is quasi-Gorenstein. In particular, $R_{\mathfrak p}$ is Gorenstein for every minimal prime $\mathfrak p$. Thus $R$ is generically Gorenstein, and Proposition~\ref{prop:upper-hull} gives the upper estimates.

For the lower bound, apply Proposition~\ref{prop:transfer} with $M=\omega_R$ and $I=\m^\ell$. Since $R$ is standard graded, $\m^q=\bigoplus_{j\ge q}R_j$ for every $q\ge0$. Hence
$(\m^{d-1+\ell})^{(d)}
=\m_d^{\,\lceil(d-1+\ell)/d\rceil}
=\m_d^{\,1+\lceil(\ell-1)/d\rceil}$.
The final assertion follows from $\widetilde R=R$ in the $S_2$ case, equivalently from Proposition~\ref{prop:upper}.
\end{proof}

\begin{remark}
The lower estimate in Theorem~\ref{thm:main} requires neither $(S_2)$ nor unmixedness; those hypotheses enter only through the upper estimates. If the lower exponent agrees with either upper exponent, then the canonical trace is exactly the corresponding power of $\m_d$.
\end{remark}

\begin{example}\label{ex:non-s2}
The $S_2$ hypothesis cannot simply be removed from the sharp upper bound involving $b(R)$. Let
$R=\kk[s^4,s^3t,st^3,t^4]\subseteq\kk[s,t]$,
where the four displayed generators have degree one, and let
$S=\kk[s,t]^{(4)}=R[s^2t^2]$.
Then $S/R\cong\kk(-1)$, because $R_n=S_n=\kk[s,t]_{4n}$ for every $n\ge2$. Thus $S$ is the $S_2$-ification of the two-dimensional domain $R$, while $R$ itself does not satisfy $(S_2)$. By Remark~\ref{rem:s2ification}, $\omega_R$ is a canonical module of $S$; equivalently here, $\omega_R\cong\omega_S$ and $[\omega_R]_n=\kk[s,t]_{4n-2}$. Thus $a(R)=-1$.

A homogeneous map $\omega_R\to R$ of degree zero would be multiplication by an element $z\in\kk[s,t]_2$ satisfying
$z\kk[s,t]_2\subseteq R_1
=\langle s^4,s^3t,st^3,t^4\rangle_\kk$.
Writing $z=\alpha s^2+\beta st+\gamma t^2$ and multiplying successively by $s^2,st,t^2$ forces $\alpha=\beta=\gamma=0$. On the other hand, multiplication by every element of $\kk[s,t]_6$ gives a degree-one map $\omega_R\to R$. Hence $b(R)=1$. Its degree-two trace component is
$\kk[s,t]_6\kk[s,t]_2=\kk[s,t]_8=R_2$,
and therefore
$\tr_R(\omega_R)=\m^2,
\ell=2$.
For $d=2$ one has
$R^{(2)}=S^{(2)}=\kk[s,t]^{(8)}$,
and multiplication $\kk[s,t]_6\kk[s,t]_2=\kk[s,t]_8$ shows that
$\tr_{R^{(2)}}(\omega_{R^{(2)}})=\m_2$.
Thus the formula obtained by deleting $(S_2)$ from Proposition~\ref{prop:upper} would incorrectly give
$\tr_{R^{(2)}}(\omega_{R^{(2)}})\subseteq\m_2^2$,
because $\lceil-a(R)/2\rceil+\lceil b(R)/2\rceil=2$. By contrast, $b(\widetilde R)=b(S)=0$, so Proposition~\ref{prop:upper-hull} gives the correct exponent one.
\end{example}

\begin{corollary}\label{cor:quadratic}
Let $R$ be a standard graded equidimensional algebra of dimension at least two satisfying $(S_2)$, and assume that $\ell<\infty$. If $a(R)<0<b(R)$, then
$\tr_{R^{(d)}}(\omega_{R^{(d)}})\subseteq\m_d^2$
for every $d\ge1$. In particular, when $R^{(d)}$ is Cohen--Macaulay, it is not nearly Gorenstein. Moreover,
$\tr_{R^{(d)}}(\omega_{R^{(d)}})=\m_d^2$
for all sufficiently large $d$.
\end{corollary}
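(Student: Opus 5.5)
The plan is to read off all three assertions from Proposition~\ref{prop:upper} for the containment and from Theorem~\ref{thm:main} (equivalently, Theorem~B) for the eventual equality, with $a(R)$ and $b(R)$ entering only through the ceilings.

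\textbf{Containment.} Under the hypotheses, Proposition~\ref{prop:upper} gives, for every $d\ge1$,
\[
\tr_{R^{(d)}}(\omega_{R^{(d)}})\subseteq\m_d^{\,\lceil-a(R)/d\rceil+\lceil b(R)/d\rceil}.
\]
Since $-a(R)\ge1$ and $b(R)\ge1$ are integers, $\lceil-a(R)/d\rceil\ge1$ and $\lceil b(R)/d\rceil\ge1$ for every $d\ge1$. So the exponent is at least $2$, and $\tr_{R^{(d)}}(\omega_{R^{(d)}})\subseteq\m_d^2$.

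\textbf{Failure of nearly Gorensteinness.} When $R^{(d)}$ is Cohen--Macaulay, nearly Gorensteinness would mean $\m_d\subseteq\tr_{R^{(d)}}(\omega_{R^{(d)}})$. Combined with the containment, this would force $\m_d\subseteq\m_d^2$. Since $R^{(d)}$ is standard graded and $\dim R\ge2$, the component $R_d$ is nonzero, so $\m_d\ne\m_d^2$ by degree reasons. This is a contradiction.

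\textbf{Eventual equality.} Because $\ell<\infty$, the discussion preceding Theorem~\ref{thm:main} shows that $R$ is unmixed. Indeed, $(S_2)$ together with equidimensionality gives $\operatorname{Ass}R=\operatorname{Min}R$, all of whose members have full dimension. So the lower bound of Theorem~\ref{thm:main} (or of Theorem~B) applies:
\[
\m_d^{\,1+\lceil(\ell-1)/d\rceil}\subseteq\tr_{R^{(d)}}(\omega_{R^{(d)}}).
\]
Here $\ell\ge1$, since $0<b(R)$ forces $\tr_R(\omega_R)\subseteq\m$. For $d\ge\ell-1$, and $d\ge1$, we have $\lceil(\ell-1)/d\rceil\le1$, so the lower exponent is at most $2$. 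Hence $\m_d^2\subseteq\tr_{R^{(d)}}(\omega_{R^{(d)}})$, and with the upper bound this gives $\tr_{R^{(d)}}(\omega_{R^{(d)}})=\m_d^2$.

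The only point needing care is the edge case $\ell=1$, where the lower exponent is already $1$. This case actually cannot occur under $0<b(R)$: if $\m\subseteq\tr_R(\omega_R)$, then $R_1$ would lie in the trace, but every element of $\tr_R(\omega_R)$ has degree at least $-a(R)+b(R)\ge2$, a contradiction. So $\ell\ge2$ automatically, and the eventual equality holds for all $d\ge\ell-1$. I do not expect any real obstacle; it is bookkeeping of ceilings.
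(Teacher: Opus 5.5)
Your proof is correct and follows the paper's argument. The containment in $\m_d^2$ comes from Proposition~\ref{prop:upper}, since both ceilings are at least one; $\ell\ge2$ follows from the degree bound at $d=1$; and the eventual equality comes from the lower bound of Proposition~\ref{prop:transfer}, as packaged in Theorem~\ref{thm:main}. You also make explicit two points the paper leaves implicit: that $d\ge\max\{1,\ell-1\}$ already suffices, because the upper containment in $\m_d^2$ holds for every $d$, and that $R_d\ne0$ gives $\m_d\ne\m_d^2$, which rules out the nearly Gorenstein property.
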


\begin{proof}
By Proposition~\ref{prop:upper}, both ceiling terms in the upper exponent are at least one. Taking $d=1$ gives $\tr_R(\omega_R)\subseteq\m^2$, and hence $\ell\ge2$. For all sufficiently large $d$, both ceiling terms are one and $1+\lceil(\ell-1)/d\rceil=2$. The two estimates therefore coincide.
\end{proof}

\section{Applications to classes with explicit canonical traces}\label{sec:example}

In this section, we apply the preceding results to Segre products, Stanley--Reisner rings and determinantal rings.
We introduce the notation needed for each class before stating the applications.

\subsection{Segre products}

For positively graded $\kk$-algebras $B$ and $C$, their \emph{Segre product} is
\[
B\# C=\bigoplus_{j\ge0}B_j\otimes_\kk C_j,
\]
with the grading induced by the displayed decomposition.

\begin{corollary}\label{cor:segre}
Let $B$ and $C$ be standard graded Gorenstein domains of dimension at least two with $a(B)<0$ and $a(C)<0$, and assume that $T=B\# C$ is Cohen--Macaulay. Then $T^{(d)}$ is nearly Gorenstein for every $d\ge-\min\{a(B),a(C)\}$.
\end{corollary}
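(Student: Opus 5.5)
We reduce to Corollary~\ref{cor:level-large}, or more directly to Proposition~\ref{prop:level-trace}, applied to $R=T$. Since $B$ and $C$ are Gorenstein of dimension at least two, with $a(B),a(C)<0$, the Segre product $T$ is a standard graded normal domain of dimension $\dim B+\dim C-1\ge3$. By Goto--Watanabe, $\omega_T\cong\omega_B\#\omega_C=B(a(B))\#C(a(C))$. Set $p=-a(B)$, $q=-a(C)$, and assume without loss of generality that $p\le q$. Then $[\omega_T]_j=B_{j-p}\otimes C_{j-q}$, which is nonzero exactly for $j\ge q$. Hence $a(T)=-q=\min\{a(B),a(C)\}$. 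Moreover $\omega_T$ is generated in the single degree $q$, because $B$ and $C$ are standard graded. Since $T$ is Cohen--Macaulay by hypothesis, $T$ is level. Therefore Proposition~\ref{prop:level-trace} applies with $c=q>0$, and for $d\ge q$ it reduces the claim to showing
\[
T_{d-q}\,[\tr_T(\omega_T)]_q=T_d .
\]

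The next step is to compute $[\tr_T(\omega_T)]_q=[\omega_T]_q\,H_0$, where $H=\Hom_T(\omega_T,T)$. By the same Segre/Goto--Watanabe reasoning, or by direct inspection of the graded pieces, the module $B(-p)\#C(-q)$ maps into $\Hom_T(\omega_T,T)$: an element of $B_{i+p}\otimes C_{i+q}$ sends $B_{j-p}\otimes C_{j-q}$ into $B_{i+j}\otimes C_{i+j}$, so it has degree $i$. In degree $0$ this gives $B_p\otimes C_q\subseteq H_0$. Together with $[\omega_T]_q=B_{q-p}\otimes C_0$, evaluation yields
\[
[\tr_T(\omega_T)]_q\supseteq (B_{q-p}B_p)\otimes C_q=B_q\otimes C_q=T_q .
\]
Consequently $[\tr_T(\omega_T)]_q=T_q$, so $J=\m^q$ in the notation of Corollary~\ref{cor:level-large}. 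This gives $J_d=T_d$ for all $d\ge q$, which is the claim.

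The main point to be careful about is the well-definedness of these maps $\omega_T\to T$. Multiplication of $b\otimes c\in B_{i+p}\otimes C_{i+q}$ on $B(a(B))\#C(a(C))$ lands in $B\#C$ with matching degrees $i+j$ on both factors. It is $T$-linear because $T$ acts factorwise. No $S_2$ argument is needed here, so Theorem~\ref{thm:dual} is used only through Proposition~\ref{prop:level-trace}. Finally, the standard gradedness of $B$ is what allows $B_{q-p}B_p=B_q$.
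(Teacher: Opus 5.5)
Your proof is correct and follows the paper's route: Goto--Watanabe's formula $\omega_T\cong\omega_B\#\omega_C$ gives levelness with $a(T)=\min\{a(B),a(C)\}$, and Corollary~\ref{cor:level-large} (via Proposition~\ref{prop:level-trace}) reduces everything to $[\tr_T(\omega_T)]_{-a(T)}=T_{-a(T)}$. The only difference is that you check this equality directly, using the degree-zero multiplication maps by $B_p\otimes C_q$, whereas the paper cites the stronger inclusion $\m_T^{|a(B)-a(C)|}\subseteq\tr_T(\omega_T)$ from Herzog--Hibi--Stamate, Theorem~4.15~(i).

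Two slips do not affect the argument. First, under the paper's convention $[M(s)]_i=M_{i+s}$, the module acting on $\omega_T$ is $B(p)\#C(q)$, not $B(-p)\#C(-q)$; the latter is $\omega_T$ itself, although your explicit description $B_{i+p}\otimes C_{i+q}$ is the right one. Second, your claim that $T$ is a normal domain is unjustified, since Gorenstein domains need not be normal, but you never use it.
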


\begin{proof}
By \cite{GW}*{Theorem~(4.3.1)}, $\omega_T\cong\omega_B\#\omega_C$. Since $B$ and $C$ are Gorenstein, this shows that $T$ is level with $a(T)=\min\{a(B),a(C)\}$. Set $q=|a(B)-a(C)|$. Herzog--Hibi--Stamate~\cite{HHS}*{Theorem~4.15~(i)} give $\m_T^q\subseteq\tr_T(\omega_T)$. Since $a(B),a(C)<0$, one has $q\le-a(T)$, and therefore $[\tr_T(\omega_T)]_{-a(T)}=T_{-a(T)}$. Set $J=[\tr_T(\omega_T)]_{-a(T)}T$. Then $J_d=T_d$ for every $d\ge-a(T)$, and the assertion follows from Corollary~\ref{cor:level-large}.
\end{proof}

\begin{remark}
If $T$ is a domain, then \cite{HHS}*{Theorem~4.15~(ii)} strengthens the preceding inclusion to $\tr_T(\omega_T)=\m_T^{|a(B)-a(C)|}$.
\end{remark}

\subsection{Stanley--Reisner rings}
Let $\Delta$ be a simplicial complex on $\{1,\ldots,n\}$, let $S=\kk[x_1,\ldots,x_n]$, and let
\[
I_\Delta
=
\bigl(
x_{i_1}\cdots x_{i_q}
:
1\le q\le n,\ 
1\le i_1<\cdots<i_q\le n,\ 
\{i_1,\ldots,i_q\}\notin\Delta
\bigr)
\subseteq S
\]
be its \emph{Stanley--Reisner ideal}. The quotient $\kk[\Delta]=S/I_\Delta$ is the \emph{Stanley--Reisner ring} of $\Delta$, and its graded maximal ideal is $\m=(x_1,\ldots,x_n)\kk[\Delta]$.

\begin{corollary}\label{cor:sr}
Let $R=\kk[\Delta]$ be Cohen--Macaulay of dimension at least three and Gorenstein on the punctured spectrum. 
Assume that $R$ is not Gorenstein.
Then, for every $d\ge1$,
$\tr_{R^{(d)}}(\omega_{R^{(d)}})=\m_d^2$.
In particular, no Veronese subalgebra of $R$ is nearly Gorenstein.
\end{corollary}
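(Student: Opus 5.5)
Apply Theorem~\ref{thm:main} (equivalently Propositions~\ref{prop:upper} and~\ref{prop:transfer}) with $\ell=2$. Then the lower exponent is $1+\lceil 1/d\rceil=2$ for every $d\ge1$. It remains to show the upper exponent is at least $2$ for every $d$. For that we need $a(R)<0$ and $b(R)>0$, which give $\lceil -a(R)/d\rceil\ge1$ and $\lceil b(R)/d\rceil\ge1$.

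\textbf{Step 1: $\ell=2$.} Since $R$ is Gorenstein on the punctured spectrum, $\tr_R(\omega_R)$ has finite colength, so $\ell<\infty$. Since $R$ is not Gorenstein, $\ell\ge1$. For Stanley--Reisner rings that are Cohen--Macaulay and Gorenstein on the punctured spectrum, the canonical trace is known explicitly. I would invoke the description of canonical traces of Stanley--Reisner rings, e.g.\ by Miyazaki or Herzog--Hibi--Stamate. It says that such a $\kk[\Delta]$ which is not Gorenstein has $\tr(\omega)=\m^2$ when $\dim\ge3$, and is never nearly Gorenstein there (the nearly Gorenstein case forces $\Delta$ to be a suitable join/low-dimensional). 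Concretely, the squarefree degree-one part $[\tr]_1$ vanishes because any degree-one element of the trace would make a link Gorenstein in a way contradicting non-Gorensteinness. Meanwhile all monomials of degree $2$ lie in the trace by the punctured-Gorenstein hypothesis combined with the $\mathbb Z^n$-graded structure of $\omega_{\kk[\Delta]}$. This is the main obstacle: establishing $[\tr_R(\omega_R)]_1=0$ and $\m^2\subseteq\tr_R(\omega_R)$ requires the combinatorial description of $\omega_{\kk[\Delta]}$ (Hochster/Gräbe) and its dual. I would first try to cite the known result directly; failing that, I would argue via $\mathbb Z^n$-graded maps $\omega\to R$ supported on links.

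\textbf{Step 2: $a(R)<0<b(R)$.} Granting $[\tr_R(\omega_R)]_1=0$, the inclusion $\tr_R(\omega_R)\subseteq\m_1^{-a(R)+b(R)}$ is the $d=1$ evaluation bound. Conversely, $\m^2\subseteq\tr$ with $[\tr]_0=0$ shows that $\tr$ contains elements in degree $2$ only, so $-a(R)+b(R)=2$ exactly. For a Stanley--Reisner ring, $a(R)\le0$, and $a(R)=0$ precisely when $\Delta$ has nonzero top reduced homology (is not acyclic in top degree). I would rule out $a(R)=0$ and $a(R)=-1$ separately.

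If $a(R)=0$, then $b(R)=2$. But $\omega_R$ then has a degree-zero generator, and the Gorenstein-on-punctured-spectrum condition with $\dim\ge3$ makes $\Delta$ a connected pseudomanifold-type complex with $\omega_R$ agreeing with the $\mathbb Z^n$-graded description. I would check that some degree-one map exists, i.e.\ $b(R)\le1$, a contradiction.

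If $a(R)=-2$ then $b(R)=0$. Here I would argue that a degree-$0$ map $\omega\to R$ produces trace in degree $2$ only from degree-$2$ generators, which is consistent. So the real requirement is only $-a(R)\ge1$ and $b(R)\ge1$, or more simply that the upper exponent equals $2$ at every $d$. The cleanest route is the combinatorial fact that for non-Gorenstein such $\Delta$ one has $a(R)=-1$ and $b(R)=1$; $\omega_R$ is generated in degree $1$ by the faces-with-full-support description (since $\dim\ge3$ and $\Delta$ is not a cone). With $a(R)=-1$ and $b(R)=1$, Corollary~\ref{cor:quadratic} gives $\tr_{R^{(d)}}(\omega_{R^{(d)}})\subseteq\m_d^2$ for all $d$, and with the lower bound of Step 1 yields equality. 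Since $\m_d^2\ne\m_d$ and $R^{(d)}$ is Cohen--Macaulay, no Veronese subalgebra of $R$ is nearly Gorenstein.
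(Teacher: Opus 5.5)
Your skeleton is the paper's: take $\tr_R(\omega_R)=\m^2$ (so $\ell=2$) from the literature, get $a(R)=-1$, show $b(R)=1$, and then both exponents in Theorem~B equal $2$ for every $d$. The genuine gap is the step $b(R)=1$. The evaluation bound only gives $\indeg\tr_R(\omega_R)\ge -a(R)+b(R)$, hence $-a(R)+b(R)\le 2$. Knowing $[\tr_R(\omega_R)]_1=0$ gives nothing in the other direction, so ``$-a(R)+b(R)=2$ exactly'' is unjustified. Your own scenario $a(R)=-2$, $b(R)=0$ shows what is at stake. There the upper exponent $\lceil 2/d\rceil+\lceil 0/d\rceil$ equals $1$ for $d\ge2$, and the argument collapses. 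You then fall back on an unproved ``combinatorial fact'' that $a(R)=-1$ and $b(R)=1$. The remark about ``faces with full support, $\Delta$ not a cone'' is not an argument for either.

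What is missing is a nonvanishing statement for the lowest-degree pairing. Take $a(R)=-1$ together with $\omega_R=R[\omega_R]_1$ from Miyashita--Varbaro (their Theorem~B and Proposition~3.3(2)). Their Theorem~A is also the correct source for $\tr_R(\omega_R)=\m^2$, rather than Miyazaki or Herzog--Hibi--Stamate. Since $\omega_R$ is generated in degree $1$, any nonzero $\varphi\in\Hom_R(\omega_R,R)_{b(R)}$ is nonzero on $[\omega_R]_1$. Hence $[\tr_R(\omega_R)]_{1+b(R)}\ne0$, and $\tr_R(\omega_R)=\m^2$ forces $1+b(R)\ge2$. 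The evaluation bound gives $1+b(R)\le 2$, so $b(R)=1$. The paper does the same thing by identifying $\omega_R$ with a fractional canonical ideal $K=RK_1$. After enlarging $\kk$ it uses an $R$-regular element of $K_1$, which acts injectively on $K^{-1}$ and yields $\indeg(KK^{-1})=1+b(R)$. With this step inserted, your case analysis over $a(R)\in\{0,-1,-2\}$ is unnecessary. Your heuristic for $\m^2\subseteq\tr_R(\omega_R)$ should simply be replaced by the citation: the punctured-Gorenstein hypothesis alone only gives finite colength.
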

\begin{proof}
By~\cite{MV}*{Theorem~A}, one has $\tr_R(\omega_R)=\m^2$. Moreover, \cite{MV}*{Theorem~B and Proposition~3.3~(2)} show that $R$ is level and that $\omega_R=R[\omega_R]_1$; hence $a(R)=-1$. Since every Stanley--Reisner ring is reduced, $R$ is generically Gorenstein.

By~\cite{AG}*{(3.1)}, applied to $R_{\m}$, and using the grading, identify $\omega_R$ with a homogeneous fractional canonical ideal $K$ satisfying $K=RK_1$. After a faithfully flat extension of the ground field, a general element of $K_1$ is $R$-regular. Multiplication by this element is injective on $K^{-1}$, and therefore $\indeg(KK^{-1})=1+b(R)$. Since $KK^{-1}=\tr_R(\omega_R)=\m^2$, we obtain $b(R)=1$. The lower and upper exponents in Theorem~B are therefore both two for every $d\ge1$.
\end{proof}

\subsection{Determinantal rings}

Let $S=\kk[x_{ij}:1\le i\le m,\ 1\le j\le n]$, let $X=(x_{ij})$ be the generic $m\times n$ matrix, and denote by $I_t(X)\subseteq S$ the ideal generated by its $t\times t$ minors.

\begin{corollary}\label{cor:det}
Assume $1\le r<m\le n$, and set $R=S/I_{r+1}(X)$. Then
for every $d\ge1$,
\[
\tr_{R^{(d)}}(\omega_{R^{(d)}})
\subseteq
\m_d^{\,\lceil-a(R)/d\rceil+\lceil(a(R)+r(n-m))/d\rceil}.
\]
If $r=1$, then this containment is an equality; more precisely,
for every $d\ge1$,
\[
\tr_{R^{(d)}}(\omega_{R^{(d)}})
=
\m_d^{\,\lceil n/d\rceil-\lfloor m/d\rfloor}.
\]
\end{corollary}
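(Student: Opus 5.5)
The plan is to feed the standard invariants of determinantal rings into Proposition~\ref{prop:upper}, and then, for $r=1$, to prove the reverse inclusion by a direct monomial computation. That last step is needed because the lower bound of Theorem~B is not sharp enough here.

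\emph{General containment.} I would recall that $R=S/I_{r+1}(X)$ is a Cohen--Macaulay normal domain of dimension $r(m+n-r)\ge2$. It therefore satisfies $(S_2)$ and is equidimensional, so Proposition~\ref{prop:upper} applies. It remains to identify $a(R)$ and $b(R)$. By Bruns--Vetter, the divisor class group is $\ZZ$, generated by the class of the prime $\mathfrak p$ generated by the $r$-minors of the first $r$ rows, and $[\omega_R]=(n-m)[\mathfrak p]$. Concretely, $\omega_R\cong\mathfrak p^{n-m}(s)$ for a shift $s$ fixed by $a(R)=-rn$. The anti-canonical module is then the fractional ideal of class $-(n-m)[\mathfrak p]$. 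This is represented by $\mathfrak q^{n-m}$, with $\mathfrak q$ the ideal of $r$-minors of the first $r$ columns, shifted by the reverse amount. Comparing generator degrees, $r(n-m)$ for both $\mathfrak p^{n-m}$ and $\mathfrak q^{n-m}$, gives $b(R)=a(R)+r(n-m)=-rm$. Substituting into Proposition~\ref{prop:upper} yields the stated exponent. I would cite these divisorial facts, for instance from Bruns--Vetter's treatment of the class group and canonical class, rather than reprove them.

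\emph{The case $r=1$.} Here $a(R)=-n$ and $b(R)=-m$, so the upper exponent is $\lceil n/d\rceil+\lceil -m/d\rceil=\lceil n/d\rceil-\lfloor m/d\rfloor$, and only the reverse inclusion remains. Theorem~B is too weak for this. By \cite{HHS}*{Theorem~4.15~(ii)} one has $\ell=n-m$, so the lower exponent is $1+\lceil(n-m-1)/d\rceil$, which differs from the target in general; for instance, when $m=n$ and $d\mid n$ the target exponent is $0$. I would therefore compute directly. The ring $R=\kk[x_1,\dots,x_m]\#\kk[y_1,\dots,y_n]$ is the normal affine semigroup ring of
\[
C=\{(\alpha,\beta)\in\ZZ_{\ge0}^m\times\ZZ_{\ge0}^n:|\alpha|=|\beta|\}.
\]
Accordingly, $R^{(d)}$ is the semigroup ring of the part of $C$ with $|\alpha|\in d\ZZ$. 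Working in the fine $\ZZ^{m+n}$-grading, $\omega_{R^{(d)}}$ is spanned by the monomials with all $\alpha_i\ge1$ and all $\beta_j\ge1$, by Danilov--Stanley together with Goto--Watanabe. The anti-canonical module is the monomial fractional ideal of all $(\gamma,\delta)$ with $|\gamma|=|\delta|\in d\ZZ$ such that adding any interior point lands in $C^{(d)}$. This condition should reduce to $\gamma_i\ge-1$ and $\delta_j\ge-1$ for all $i,j$, once a check at the minimal interior points is made.

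The trace is then spanned by sums of an interior point and a dual point. The key step, and the main obstacle, is to show that every monomial $(\alpha,\beta)$ of $R^{(d)}$ of degree $e=\lceil n/d\rceil-\lfloor m/d\rfloor$ (that is, $|\alpha|=|\beta|=ed$) decomposes as such a sum. I would construct the decomposition explicitly. Take the interior part to be $(\mathbf 1+\alpha',\mathbf 1+\beta')$ of total degree $kd$, where $k=\lceil n/d\rceil$, by adding to $\mathbf 1\in\ZZ^n$ nonnegative extra entries placed under the support of $\beta$. The dual part $(\alpha-\mathbf 1-\alpha',\beta-\mathbf 1-\beta')$ then has all entries at least $-1$ and degree $(e-k)d=-\lfloor m/d\rfloor d$. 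For this to work one must choose $\alpha'\ge0$ with $|\alpha'|=kd-m$ so that $\alpha-\alpha'\ge0$ entrywise. Since $kd-m\le ed$, enough room exists after distributing greedily along the support of $\alpha$, and similarly $|\beta'|=kd-n\in[0,d)$ fits inside $\beta$. The integers were chosen so that these degree inequalities hold exactly at the boundary $e$, matching the upper bound. Finally, because $R^{(d)}$ is standard graded, containing all of degree $e$ gives $\m_d^{\,e}\subseteq\tr$, completing the equality.
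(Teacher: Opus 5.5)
Your treatment of the case $r=1$ is essentially the paper's argument. The paper takes $[\Hom_R(\omega_R,R)]_j=P_{j+m}\otimes_\kk Q_{j+n}$ from \cite{HHS}*{Proposition~4.18} and passes to $R^{(d)}$, implicitly via Theorem~\ref{thm:dual}. It then pairs $\omega$-degree $\lceil n/d\rceil$ with dual degree $\lceil -m/d\rceil=-\lfloor m/d\rfloor$, using that multiplication in $P$ and $Q$ is surjective. You instead read off the dual of $\omega_{R^{(d)}}$ directly in the fine grading, which bypasses Theorem~\ref{thm:dual}. Your greedy choice of $\alpha'$ and $\beta'$ is that same surjectivity written out monomial by monomial, with the same two degrees. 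Only the trivial half of your description of the dual is used: a lattice point with all entries $\ge -1$ and $|\gamma|=|\delta|\in d\ZZ$ sends every interior point into $C^{(d)}$. So the check at minimal interior points is unnecessary.

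For the general containment you take a different route, and as written your derivation of $b(R)$ is wrong, although the value you state is right. The paper avoids the class group entirely. By \cite{FHST}*{Theorem~1.1}, $\tr_R(\omega_R)=I_r(X)^{n-m}R$. Since $R$ is a domain, the initial degree of $\omega_R\cdot\Hom_R(\omega_R,R)$ is $-a(R)+b(R)$, so $b(R)=a(R)+r(n-m)$. There are no shifts to track, and the value of $a(R)$ is never needed.

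In your version, two things are off. First, for $m<n$ the canonical module is, up to shift, a power of the ideal $\mathfrak q$ of $r$-minors of the first $r$ \emph{columns}, not of $\mathfrak p$. For $r=1$, the interior ideal has class $m\,\mathrm{cl}(\mathfrak p)+n\,\mathrm{cl}(\mathfrak q)=(n-m)\,\mathrm{cl}(\mathfrak q)$. This swap is harmless for degree counts.

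Second, and this is the real problem, the dual is not the other power with the shift simply reversed. Let $k=n-m$ and let $\Delta$ be the leading $r$-minor. Then $\Delta R=\mathfrak p\cap\mathfrak q$ gives $\mathfrak q^{(-k)}=\Delta^{-k}\mathfrak p^{(k)}\cong\mathfrak p^{(k)}(rk)$. Hence $\omega_R\cong\mathfrak q^{(k)}(s)$ yields $\Hom_R(\omega_R,R)\cong\mathfrak p^{(k)}(rk-s)$. Since $a(R)=-rn$ forces $s=-rm$, merely reversing the shift would give $b(R)=rk+s=rn-2rm$. It is the extra shift $rk$ coming from $\Delta^{k}$ that produces $b(R)=s=-rm=a(R)+r(n-m)$.

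Replace your sentence deriving $b(R)$ with this computation, or with the paper's trace argument, and the proof is complete.
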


\begin{proof}
By~\cite{FHST}*{Theorem~1.1},
$\tr_R(\omega_R)=I_r(X)^{n-m}R$. Since $R$ is a domain,
$\indeg\tr_R(\omega_R)=-a(R)+b(R)=r(n-m)$,
and hence $b(R)=a(R)+r(n-m)$. The stated containment follows from
Proposition~\ref{prop:upper}.

Suppose now that $r=1$. Let $P=\kk[u_1,\ldots,u_m]$ and
$Q=\kk[v_1,\ldots,v_n]$. Under the standard identification
$R\cong P\#Q$, which sends $x_{ij}$ to $u_iv_j$, one has
$[\omega_R]_j=P_{j-m}\otimes_\kk Q_{j-n}$,
$[\Hom_R(\omega_R,R)]_j
=
P_{j+m}\otimes_\kk Q_{j+n}$;
the second description follows from~\cite{HHS}*{Proposition~4.18}.
Thus $[\tr_{R^{(d)}}(\omega_{R^{(d)}})]_d \neq 0$ when there
exists $i\in\ZZ$ such that $id\ge n$ and $(t-i)d\ge-m$. Whenever
such an $i$ exists, multiplication in $P$ and $Q$ is surjective, so
this component equals $R_{td}$. The least possible $t$ is
\[
\left\lceil\frac nd\right\rceil
+
\left\lceil-\frac md\right\rceil
=
\left\lceil\frac nd\right\rceil
-
\left\lfloor\frac md\right\rfloor.
\]
Since $R^{(d)}$ is standard graded, the asserted equality follows.
\end{proof}


\section*{Acknowledgments}
The author was supported by JSPS KAKENHI Grant Number 25KJ1744. OpenAI's ChatGPT was used during preparation as an auxiliary generative-AI tool for exploratory discussion, checks of algebraic manipulations and examples, comparison with cited literature, and assistance with English exposition and LaTeX. All material used in the final manuscript was checked by the author against the relevant arguments and sources. The author assumes full responsibility for the mathematics, citations, and final text.

\begin{singlespace}

\end{singlespace}

\end{document}